\title{Isometric embedding and spectral constraints for weighted graph metrics}
\author{J. Cheng\footnote{Department of Mathematics, UT Austin, jeffrey.cheng@utexas.edu},  I.M.J. McInnis\footnote{Department of Mathematics, Princeton University, imj@princeton.edu},  M. Yee\footnote{Department of Mathematics, Brown University, matthew\_w\_yee@brown.edu}}
\date{\today}
\newtheorem{thm}{Theorem}[section]
\newtheorem{coro}[thm]{Corollary}
\newtheorem{lmm}[thm]{Lemma}
\newtheorem{defn}[thm]{Definition}
\newtheorem{prop}[thm]{Proposition}
\newtheorem{rem}[thm]{Remark}
\newtheorem{conj}[thm]{Conjecture}
\begin{document}
\maketitle


\begin{abstract}
    A weighted graph $\phi G$  
    encodes a finite metric space $D_{\phi G}$. When is $D$ totally decomposable? When does it embed in $\ell_1$ space? When does its representing matrix have $\leq 1$ positive eigenvalue? 
We give useful lemmata and prove that these questions can be answered without examining $\phi$ if and only if $G$ has no $K_{2,3}$ minor. 
We also prove results toward the following conjecture. $D_{\phi G}$ has $\leq n$ positive eigenvalues for all $\phi$, if and only if $G$ has no $K_{2,3,...,3}$ minor, with $n$ threes.\footnote{This work was completed during the 2021 summer REU in matrix analysis at the College of William \& Mary, supervised by Prof. Charles R. Johnson. This work was supported by the National Science Foundation Grant DMS \#0751964. We thank Professor Johnson; the second author thanks Alan Chang.}  
\end{abstract}

\section{Introduction}
This paper considers simple, undirected, connected, weighted graphs presented as $\mathcal{G}\equiv \phi G \equiv (V(G),\, E(G),\, \phi: E(G)\rightarrow [0,\infty))$. For convenience, label the elements of $V(G)$ by $v_1,...,v_n$. For a path $p$ in $\mathcal{G}$, the quantity $\sum_{e\in E(p)} \phi(e)$ is called the \emph{length} and is denoted $|p|_{\phi G}$.
Letting $P_{ij}$ denote the set of paths in $G$ with endpoints $v_i,v_j$, the \emph{distance} $d_{\phi G}(v_i,v_j)$ 
between vertices $v_i,v_j$ is $\min_{p\in P_{ij}} |p|_{\phi G}$. This yields a finite metric space $(V(G), d_{\phi G})$, which we represent as a the \emph{distance matrix} $D\equiv D_{\phi G}$ whose $i,j$ entry $d_{ij}$ equals $d_{\phi G}(v_i,v_j)$. (To consider $\mathcal{G}$ ``unweighted''––i.e., with $\phi(e)=1$ for all $e$––one may instead write $D_G$.) None of our investigated properties vary under conjugation by a permutation matrix. (Otherwise we'd be studying graphs equipped with edge weights and vertex labels too.) Thus, we can freely move between ``matrix'' and ``finite (semi)metric space.'' 
The broad question is ``how does $G$ 
constrain the properties of $D$?"

From the matrix perspective, we consider $D$'s spectrum, i.e. its list of eigenvalues, which we write as a multiset $\sigma(D)$. A few spectral properties are obvious, but like most ``inverse eigenvalue problems,'' this soon proves intractable. (It is easier for unweighted graphs. See, for instance, \cite{LIN20131662, AOUCHICHE2014301,ATIK2015256,EDELBERG197623,GRAHAM197860,ZHANG201730}. The exciting and application-rich \cite{doi:10.1080/03081087.2020.1803187} contextualizes $D_G$ and $\sigma(D_G)$ in spectral graph theory broadly by defining a ``generalized distance matrix.'')

We study the spectrum's coarser cousin, inertia. The \emph{inertia} $i(A)$ of a matrix $A$ is a triplet of natural numbers $(i_+(A), i_0(A), i_-(A))$, denoting (respectively) the number of positive, zero, and negative eigenvalues of $A$. In the language of \cite{ChengJohnsonMcInnisYee}, $i(D_{\phi G})$ is the ``distance inertia'' of the weighted graph $\phi G$. Along with \cite{MR2133282}, that is the only other paper we know of that studies distance inertia for weighted graphs. Most study has focused on the distance inertia of unweighted graphs.\footnote{Even more extensively studied is the ``distance energy'' or ``$D$-energy.'' Instead of abstracting to the signs of $D$'s eigenvalues, one abstracts to their magnitude. The $D$-energy $DE(G)$ is defined as $\sum_{\lambda \in \sigma(D_G)}|\lambda|$. It's uninteresting for weighted graphs unless we somehow constrain $\phi$. One natural constraint, of which the ``unweighted'' study is a special case, might be to consider the case where $\sum_{e\in E(G)} \phi(e)=|E(G)|$. We are too far afield to give an appropriate survey, but the study of $DE(G)$ began in \cite{indulal2008distance} and has since intertwined with the study of $i(D_G)$ and $\sigma(D_G)$.} See \cite{MR289210, MR2133282, MR3056981,ZHANG2014108, TIAN2017470}. Our techniques have nothing in common with prior studies. 

Our specific question is, ``Given $n$, for what graphs $G$ is $i_+(D_{\phi G})$ always $\leq n$, regardless of $\phi$?'' We answer this question for $n=1$ but only conjecture about higher $n$. Unweighted distance inertia has been studied and characterized for cactus graphs \cite{MR3056981}, and weighted distance inertia has been studied for trees \cite{MR2133282}. 
This is all a specialization of the inverse eigenvalue problem for hollow, symmetric, nonnegative matrices. See  \cite{CFJK}, which observes that positive eigenvalues are rare in such matrices. By computer experimentation, they seem to be even rarer in distance matrices. See  Conjecture \ref{WeakConjecture}.

From the ``semimetric'' perspective, we consider the metric spaces in which $Dd$ can be embedded. (Whenever we speak of ``embedding,'' we mean a distance-preserving inclusion.) Though fascinating, this is too broad. We restrict ourselves to the single metric space $\ell_1$. (The points in $\ell_1$ are the real series that are nonzero at finitely many places; the metric comes from the norm $||(a_n)||=\sum |a_n|$. In other words, $\ell_1$ is $\mathbb{R}^{k}$ for arbitrarily high $k$ with the taxicab/Manhattan/$\ell_1$ metric––$D$ embeds in a finite-dimensional Manhattan space if and only if it embeds in $\ell_1$.) So our question is ``For what graphs $G$ is $D_{\phi G}$ always embeddable in $\ell_1$, regardless of $\phi$?'' 

The space $\ell_1$ has proven itself specially interesting for geometry, combinatorics, and theoretical computer science (\cite{MR2841334} is a good survey). The $\ell_1$ embedding of finite metrics (and graph metrics in particular) has also been extensively studied, such as in \cite{Arora, Chekuri, CutsTrees, Newman2010FiniteVS}.  The combinatorially interesting structure of $\ell_1$ gives rise to another property of a metric: being ``totally decomposable.'' This technical condition has been used in hundreds of papers in discrete mathematics and mathematical biology too various to survey here. It was first defined by Bandelt and Dress \cite{MR1153934}, who also originated many of the facts we'll use to prove our main result (though we usually cite the more accessible Deza and Laurent \cite{MR2841334}). That main result is this.
\newtheorem*{maintheorem}{Theorem \ref{k23theorem}}
\begin{maintheorem}
Let $G$ be a graph. The following are equivalent:
	\begin{enumerate}[label=(\roman*)]
		\item $G$ has no $K_{2,3}$ (graph) minor.
		\item $\nexists \phi: E(G) \rightarrow [0,\infty)$, $c\in \mathbb{R}$  s.t. $D_{cK_{2,3}}$ is a (distance) minor of $D_{\phi G}$.
		\item $\forall  \phi: E(G) \rightarrow [0,\infty)$, $D_{\phi G}$ is totally decomposable.
		\item $\forall \phi: E(G) \rightarrow [0,\infty)$, $D_{\phi G}$ is $\ell_1$-embeddable.
		\item $\forall \phi: E(G) \rightarrow [0,\infty)$, $i_+(D_{\phi G})=1$. 
	\end{enumerate}
\end{maintheorem}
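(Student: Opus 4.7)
The plan is to prove a cycle
(i)$\Rightarrow$(iii)$\Rightarrow$(iv)$\Rightarrow$(v)$\Rightarrow$(ii)$\Rightarrow$(i),
concentrating the hard combinatorics in (i)$\Rightarrow$(iii) and letting standard metric-geometric facts carry the rest.

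First I would dispatch the metric chain (iii)$\Rightarrow$(iv)$\Rightarrow$(v) by citing well-known results. By Bandelt--Dress (cf.\ Deza--Laurent), a totally decomposable finite semimetric is a nonnegative sum of split pseudometrics, hence a nonnegative sum of cut semimetrics, hence $\ell_1$-embeddable; and any $\ell_1$-embeddable finite metric is of negative type, meaning $x^\top D x \leq 0$ on the codimension-one hyperplane $\{\sum x_i = 0\}$. Then $D$ is negative semidefinite on a codimension-one subspace, and Cauchy interlacing forces $i_+(D) \leq 1$. Since $\operatorname{tr}(D)=0$ and $D\neq 0$ for any non-trivial $\phi$, we conclude $i_+(D_{\phi G}) = 1$.

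To close the cycle, I would prove (v)$\Rightarrow$(ii)$\Rightarrow$(i) by direct arguments. For (v)$\Rightarrow$(ii), a short computation gives $i_+(D_{cK_{2,3}}) = 2$ for every $c > 0$; combined with the monotonicity of $i_+$ under the paper's notion of distance minor (principal-submatrix moves handled by Cauchy interlacing, any contraction-type moves handled by preparatory lemmata), the existence of such a distance minor of $D_{\phi G}$ would force $i_+(D_{\phi G}) \geq 2$, contradicting (v). For (ii)$\Rightarrow$(i), I would argue the contrapositive: given a $K_{2,3}$ model in $G$ with branch sets $B_1,\dots,B_5$, set $\phi$ so that edges inside each $B_i$ have weight $\varepsilon$ and the edges realizing the minor adjacency pattern have weight $1$; in the $\varepsilon \to 0^+$ limit, the distances between representative vertices approach those of $D_{K_{2,3}}$, and a renormalization or limiting argument produces the required exact distance minor $D_{cK_{2,3}}$.

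The genuinely difficult step, and what I expect to be the main obstacle, is (i)$\Rightarrow$(iii). My plan is structural induction on $G$. First reduce to the 2-connected case using that total decomposability is preserved under gluing at a cut vertex, the cut vertex itself supplying an automatic split. For 2-connected, $K_{2,3}$-minor-free $G$ one exploits the restricted structure of that class (close to outerplanar, with $K_4$ as essentially the only extra ingredient) to produce, for every weighting $\phi$, a nontrivial split of the semimetric $D_{\phi G}$; applying that split reduces $D_{\phi G}$ to smaller $K_{2,3}$-minor-free weighted pieces plus a cut semimetric, and induction on $|E(G)|$ finishes. The combinatorial heart of the theorem is guaranteeing a useful split exists \emph{uniformly in $\phi$}, which is precisely where the paper's ``useful lemmata'' on how splits interact with arbitrary weight assignments must do the heavy lifting.
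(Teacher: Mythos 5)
Your proposed cycle $(i)\Rightarrow(iii)\Rightarrow(iv)\Rightarrow(v)\Rightarrow(ii)\Rightarrow(i)$ differs from the paper's $(i)\rightarrow(ii)\rightarrow(iii)\rightarrow(iv)\rightarrow(v)\rightarrow(i)$ in a way that changes where the work lands. The chain $(iii)\Rightarrow(iv)\Rightarrow(v)$ matches the paper (total decomposability gives an explicit coordinate embedding into $\ell_1$; negative type plus $\operatorname{tr}=0$ gives $i_+=1$). Your $(v)\Rightarrow(ii)$ is a viable direction, but your stated justification is off: the paper's ``distance minor'' involves no contraction-type moves, it is a principal submatrix of a \emph{rescaling} $\lambda_0 M_0 + \sum\lambda_S\delta(S)$ of the split decomposition, and showing $i_+$ is controlled under that operation is exactly what Lemma \ref{cutweighting} (the cut-weighting lemma) is for: it lets you realize such a rescaling (with $\lambda_0>0$, normalizing to $1$) as $D_{\psi G}$ for a new weighting $\psi$, after which interlacing applies; the $\lambda_0=0$ case needs a separate sentence, since then $M''$ is a nonnegative sum of cuts, hence $\ell_1$-embeddable with $i_+\le 1$, so it cannot contain $D_{cK_{2,3}}$ as a principal submatrix. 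Your $(ii)\Rightarrow(i)$ is essentially fine but the ``$\varepsilon\to 0^+$ limit'' is an unnecessary detour: just set the intra-branch-set weights to $0$ outright (weights in $[0,\infty)$ permit this), and you must also set the edges of $G$ that are \emph{not} part of the minor model large enough that they do not create shortcuts — the paper does precisely this.

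The genuine gap is $(i)\Rightarrow(iii)$. Your plan — reduce to 2-connected blocks, exploit the structure of 2-connected $K_{2,3}$-minor-free graphs, find a split, subtract it, induct on $|E(G)|$ — does not currently close, because subtracting a split via the cut-weighting lemma leaves you with the \emph{same} graph $G$ under a new weighting, so $|E(G)|$ does not decrease and your induction has no well-founded measure. To make this route work you would essentially have to reproduce the outerplanar/$K_4$ block decomposition and MaxFlow–MinCut argument of \cite{CutsTrees}, which the paper mentions in a remark as an alternative proof of $(i)\Leftrightarrow(iii)$ that they discovered only later. The paper itself avoids this by proving $(i)\Rightarrow(ii)$ directly (strip away all splits with the cut-weighting lemma, obtain a split-prime $D_{\psi G}$ in which $D_{cK_{2,3}}$ sits as a principal submatrix, extract vertex-disjoint geodesics forming a $K_{2,3}$ subdivision) and then cites the Bandelt–Dress/Deza–Laurent fact that absence of a $K_{2,3}$ distance minor forces total decomposability for $(ii)\Rightarrow(iii)$. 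That routing offloads the hardest classification to the literature and keeps the novel content in the cut-weighting construction; your route shoulders that classification yourself and, as written, leaves it as a sketch.
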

(In the language of \cite{ChengJohnsonMcInnisYee}, this constitutes a characterization of all universally weakly unipositive graphs.)

Unfortunately, the tools from Bandelt and Dress don't seem to help us in generalizing to the problem's obvious variants. For instance, we can  substitute any $\ell_p$ for $\ell_1$ or ``$\leq n$ positive eigenvalues'' for ``$\leq 1$ positive eigenvalues.'' (Swapping ``positive'' for ``nonnegative'' or ``negative'' is dull, in light of Corollary \ref{WhatIsInteresting} and Remark \ref{OnlyPositivesAreInteresting}.)

As implied by Theorem \ref{metricembeddingisminorclosed}, the set of graphs that are always $\ell_p$-embeddable can always be characterized by a finite list of excluded minors. For $p=1$, the list is $\{K_{2,3}\}$ by Theorem \ref{k23theorem}. For $p=\infty$, the list is empty (recalling that any finite metric space embeds isometrically in $\ell_\infty$). However, since $p$ is a continuous parameter, and the set of excluded minors is a discrete, qualitative parameter, it seems quite hard to fill in all the intermediate values of $p$. 

The set of graphs whose $D_{\phi G}$ have $\leq n$ positive eigenvalues for all $\phi$ is probably more tractable, since $n$ is a discrete parameter. Our Corollary \ref{InertialRestrictionMinorClosed} says that they too can be characterized by forbidden minors, and Lemma \ref{ktwothreesforbidden} says that the graph $K_{2,3,...,3}$ (where there are $n$ threes) is among them. In Conjecture \ref{StrongConjecture}, we guess that it's alone. 

We begin by proving some useful facts about the nature of $D_{\phi G}$ and the relationship of its spectrum and metric embeddability to the minoring operations on $G$. Then we build and deploy the specific machinery for Theorem \ref{k23theorem}. We conclude with our conjectures. For brevity, we omit many standard definitions. 

\section{General Results}
\begin{prop}\label{ObviousFacts}
    As long as $\phi$ isn't uniformly $0$, the following hold of $D\equiv D_{\phi G}$. The eigenvalues of $D$ are real and sum to 0. $D$ is irreducible. The spectral radius $\rho(D)$ of $D$ is a simple eigenvalue of $D$ whose associated eigenvector has nonnegative entries. 
    For $d_{ij}$ the entries of $D$, we have 
            \[\min_i \sum_j d_{ij}\leq \rho(D) \leq \max_i \sum_j d_{ij}. \]
    \end{prop}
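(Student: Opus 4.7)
The plan is to deduce every claim from two classical theorems: the spectral theorem applied to a symmetric matrix, and the Perron--Frobenius theorem applied to an irreducible nonnegative matrix. The preparatory work is just to verify that $D$ has the right structural features.

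First I would observe that for any path $p$ from $v_i$ to $v_j$, reversing the edge sequence yields a path of equal length from $v_j$ to $v_i$, so $d_{ij} = d_{ji}$ and $D = D^T$; the spectral theorem then gives $\sigma(D) \subset \mathbb{R}$. The empty path at $v_i$ has length $0$, so $d_{ii} = 0$, and $\sum_{\lambda \in \sigma(D)} \lambda = \operatorname{tr}(D) = 0$. Irreducibility of $D$ (as a nonnegative matrix) is equivalent to strong connectivity of its directed support digraph, and this should follow from the connectivity of $G$ together with $\phi \not\equiv 0$: between any two vertices one produces a chain $v_i = u_0, u_1, \ldots, u_k = v_j$ with $d_{u_l u_{l+1}} > 0$ at each step. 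With irreducibility in hand, Perron--Frobenius immediately delivers that $\rho(D)$ is a simple eigenvalue whose eigenspace contains a strictly positive, and hence nonnegative, vector.

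The two-sided row-sum bound on $\rho(D)$ is a standard sandwich. Applying $D$ to the all-ones vector $\mathbf{1}$ gives $(D\mathbf{1})_i = \sum_j d_{ij}$, so the upper bound reads as the operator-norm estimate $\rho(D) \leq \|D\|_{\infty \to \infty} = \max_i \sum_j d_{ij}$. The matching lower bound follows from the Collatz--Wielandt formula $\rho(D) \geq \min_i (Dx)_i / x_i$ evaluated at $x = \mathbf{1}$. I expect the subtlest step to be irreducibility in the presence of zero-weight edges, since zero-weight detours can in principle annihilate off-diagonal entries of $D$ (e.g., a triangle with two zero-weight edges and one positive edge collapses to $D = 0$); this is handled either by treating $\phi > 0$ without loss of generality, or by first contracting zero-weight edges and working on the quotient graph before invoking Perron--Frobenius on the residual strictly-positive weighting.
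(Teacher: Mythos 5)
Your proof follows essentially the same route as the paper's: symmetry and the zero diagonal give real eigenvalues summing to zero; irreducibility together with Perron--Frobenius gives the simple maximal eigenvalue with a nonnegative eigenvector; and the row-sum sandwich is the standard Perron--Frobenius bound (the paper invokes it without naming Collatz--Wielandt, but the content is identical). The paper's irreducibility argument is phrased as ``a symmetric matrix is reducible only if it is a block-sum, and the triangle inequality forces a block-sum $D$ to vanish,'' which for symmetric matrices is equivalent to your strong-connectivity phrasing.

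What you flag as ``the subtlest step'' is in fact a genuine error in the proposition as stated, and your triangle example is a counterexample. Take the triangle on $\{a,b,c\}$ with $\phi(ab)=\phi(bc)=0$ and $\phi(ca)=1$: the zero-weight detour through $b$ gives $d(a,c)=0$, so $D_{\phi G}=0$ even though $\phi\not\equiv 0$. The $3\times 3$ zero matrix is reducible and has $\rho=0$ as a triple eigenvalue, contradicting two of the proposition's claims. The paper's own proof has exactly the oversight you anticipate: it asserts that the triangle inequality prevents $D$ from being a block-sum ``unless $\phi=0$ everywhere,'' but the triangle inequality only forces a block-sum $D$ to be the zero matrix, and $D=0$ does not imply $\phi=0$. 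Your two proposed repairs are not sufficient as stated: assuming $\phi>0$ outright changes the hypothesis, and contracting zero-weight edges proves a claim about the quotient graph rather than about $D_{\phi G}$ itself (and, as your example shows, the quotient may collapse to a single vertex). The honest fix is to replace the hypothesis ``$\phi$ isn't uniformly $0$'' with ``$D_{\phi G}\neq 0$''; under that hypothesis the paper's block-sum argument does work, since the triangle inequality forces any block-sum $D$ to be identically zero.
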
 
\begin{proof}
    $D$ is symmetric with diagonal uniformly $0$, so its eigenvalues are real and sum to $0$. $D$ is irreducible because a symmetric matrix is irreducible unless it's a block-sum of matrices, which (by the triangle inequality) $D$ cannot be, unless $\phi=0$ everywhere, which we have forbidden. The rest follows from the Perron-Frobenius Theorem, since $D$ is nonnegative and irreducible. 
\end{proof}

\begin{lmm}\label{edgecontracting}
Consider $\phi G$, with $\phi(e)=0$ for some $e\in E(G)$. Define $\phi'G'$ from $G$ by contracting $e$ and define $\phi':=\phi_{E(G')}$. Then $\sigma(D_{\phi G})$ is just $\sigma(D_{\phi'G'})$ with an additional 0.
\end{lmm}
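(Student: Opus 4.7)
The plan is to reduce $D \equiv D_{\phi G}$ to $D_{\phi'G'} \oplus (0)$ by an elementary congruence. The key observation is that a zero-weight edge forces a row-equality in the distance matrix, which both supplies the claimed extra $0$ eigenvalue and identifies the complementary block with $D_{\phi'G'}$.

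First I would note that $\phi(e) = 0$ for $e = \{v_i, v_j\}$ gives $d_{\phi G}(v_i, v_j) = 0$, and the triangle inequality then yields $d_{\phi G}(v_i, v_k) = d_{\phi G}(v_j, v_k)$ for every $v_k$. Hence the $i$-th and $j$-th rows of $D$ are identical (and so, by symmetry, are the $i$-th and $j$-th columns), and in particular $D(e_i - e_j) = 0$ already produces the extra $0$ in the spectrum. Moreover, the principal $(n-1) \times (n-1)$ submatrix $D^*$ obtained by deleting row and column $j$ coincides entry-by-entry with $D_{\phi'G'}$: the contracted vertex $v^*$ satisfies $d_{\phi'G'}(v^*, v_k) = d_{\phi G}(v_i, v_k)$ because any shortest path in $G$ using $v_j$ can be rerouted through $v_i$ at no extra cost, and the remaining pairwise distances survive the contraction unchanged.

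To splice these observations, I would apply the congruence $E = I - e_i e_j^T$: a direct computation shows that $E^T D E$ has its $j$-th row and column set to zero while every other entry of $D$ is preserved (using that rows and columns $i, j$ of $D$ agree and $d_{ii} = d_{ij} = d_{jj} = 0$); after a harmless permutation, $E^T D E = D_{\phi'G'} \oplus (0)$. Sylvester's law of inertia then yields $i(D) = i(D_{\phi'G'}) + (0, 1, 0)$, giving the promised additional $0$ and matching all sign counts. The main subtlety I expect to have to address is that congruence records only inertia and nullity, not the magnitudes of the individual nonzero eigenvalues; to upgrade to the multiset-level statement $\sigma(D) = \sigma(D_{\phi'G'}) \cup \{0\}$ as literally stated, one would need to produce an honest similarity, which I would try by diagonalizing the restriction of $D$ to the invariant subspace $\{x \in \mathbb{R}^n : x_i = x_j\}$ and matching eigenvectors against those of $D_{\phi'G'}$, paying attention to the non-orthogonal normalization along the merged coordinate.
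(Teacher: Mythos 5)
Your congruence argument via $E = I - e_i e_j^T$ is correct and, as you compute, shows that $E^T D E$ is (up to permutation) $D_{\phi'G'}\oplus(0)$; Sylvester's law of inertia then gives $i(D_{\phi G}) = i(D_{\phi'G'}) + (0,1,0)$. Your reservation at the end is exactly the right one to have, and in fact it is fatal to the stronger statement: the multiset-level claim $\sigma(D_{\phi G}) = \sigma(D_{\phi'G'}) \cup \{0\}$ is \emph{false}. Take $G$ to be the path $v_1\,v_2\,v_3$ with $\phi(v_1v_2)=1$, $\phi(v_2v_3)=0$. Then
\[
D_{\phi G} = \begin{pmatrix} 0 & 1 & 1 \\ 1 & 0 & 0 \\ 1 & 0 & 0 \end{pmatrix}
\]
has characteristic polynomial $\lambda^3 - 2\lambda$, so $\sigma(D_{\phi G}) = \{0, \sqrt{2}, -\sqrt{2}\}$; contracting the zero-weight edge gives $K_2$ with $\sigma(D_{\phi'G'}) = \{1,-1\}$, and these spectra do not agree after padding with a zero. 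So there is no ``honest similarity'' to be found, and you should not try to upgrade.

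Interestingly, the paper's own proof contains the same flaw you were worried about: it asserts that each eigenvector of $D_{\phi'G'}$, with a $0$ appended, becomes an eigenvector of $D_{\phi G}$ with the same eigenvalue. In the example above this fails already for $(1,1)^T$: $D_{\phi G}(1,1,0)^T = (1,1,1)^T$, which is not a scalar multiple of $(1,1,0)^T$. The claim would only hold for eigenvectors whose last coordinate vanishes or for eigenvalue $0$. The lemma should be stated in terms of inertia rather than spectrum; that weaker conclusion is exactly what Theorem \ref{InertiaMinors}, Corollary \ref{InertialRestrictionMinorClosed}, and Corollary \ref{WhatIsInteresting} actually use, and it is exactly what your congruence argument proves cleanly. (For Theorem \ref{InertiaMinors} one additionally wants the scaling-by-$r$ observation, which is unaffected.) In short: your proof is not merely an alternative route, it is the correct route, and the assertion as literally written should be weakened to the inertia statement your argument establishes.
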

\begin{proof}
    Assume without loss of generality that  $e=\overline{v_{|V(G)|-1}v_{|V(G)|}}$. Then we see that, by adding to $D_{\phi' G'}$ a duplicate of its last column, a duplicate of its last row, and a $0$ in the bottom-right corner, we obtain $D_{\phi G}$. Then each eigenvector of $D_{\phi' G'}$, with a 0 appended to the end, is an eigenvector of $D_{\phi G}$ with the corresponding eigenvalue. Furthermore, this construction hasn't increased the rank, so the new eigenvalue must be 0.
\end{proof}

\begin{lmm}\label{edgedeleting}
Consider some $\phi G$. Define $G'$ from $G$ by adding an edge $e'$. Define $\phi'$ as equal to $\phi$ everywhere, except on $e'$, where it is $\geq (|V(G)|-1)\max_{e\in E(G)} \phi(e)$. Then $D_{\phi' G'} =D_{\phi G}$. (So their respective spectra are equal.)
\end{lmm}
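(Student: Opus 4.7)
The plan is to show that the added edge $e'$ is so heavy that no shortest path in $\phi' G'$ ever traverses it, so distances are unaffected. Since every path in $G$ remains a path in $G'$ of the same $\phi'$-length, the inequality $d_{\phi' G'}(v_i,v_j)\leq d_{\phi G}(v_i,v_j)$ is immediate for all pairs $v_i,v_j$. All the work is in the reverse inequality.

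First I would bound the diameter of $\phi G$: since $G$ is connected, any shortest $v_iv_j$-path in $\phi G$ can be taken to be simple and hence uses at most $|V(G)|-1$ edges, so
\[ d_{\phi G}(v_i,v_j) \;\leq\; (|V(G)|-1)\max_{e\in E(G)}\phi(e) \;\leq\; \phi'(e'), \]
where the last inequality is the hypothesis on $\phi'(e')$.

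Next I would fix $v_i,v_j$ and take any path $p$ from $v_i$ to $v_j$ in $G'$, splitting into two cases. If $p$ does not use $e'$, then $p$ is a path in $G$ with $|p|_{\phi' G'}=|p|_{\phi G}\geq d_{\phi G}(v_i,v_j)$. If $p$ does use $e'$, then $|p|_{\phi' G'}\geq \phi'(e')\geq d_{\phi G}(v_i,v_j)$ by the diameter bound. Minimizing over $p$ yields $d_{\phi' G'}(v_i,v_j)\geq d_{\phi G}(v_i,v_j)$, completing the equality on every entry of the distance matrices.

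There isn't really a hard step: the only point that needs care is recognizing the pigeonhole-style diameter bound, which is precisely why the constant $|V(G)|-1$ appears in the hypothesis. The spectral conclusion $\sigma(D_{\phi' G'})=\sigma(D_{\phi G})$ is then immediate from $D_{\phi' G'}=D_{\phi G}$.
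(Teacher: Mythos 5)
Your proof is correct and follows essentially the same route as the paper: bound the diameter of $\phi G$ by $(|V(G)|-1)\max_{e}\phi(e)$ using simple paths, then observe that any path through $e'$ is already at least that long and so can be ignored. You merely spell out the two inequalities a bit more explicitly than the paper does.
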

\begin{proof} Let $p_{uv}$ be the path of least length between $u$ and $v$ in $G$. Since $G$ is connected and $p_{uv}$ contains at most $|V(G)|-1$ edges, $|p_{uv}|_{\phi G} \leq (|V(G)|-1)\max_{e\in E(G)} \phi(e)$. Any path from $u$ to $v$ in $G'$ either uses $e'$ or is in $G$; if it uses $e'$, it is by definition at least as long as $P_{uv}$, so it can be ignored. Thus, $d_{\phi G}(u,v)=d_{\phi' G'}(u,v)$, so $D_{\phi'G'}=D_{\phi G}$. 
\end{proof}

\begin{thm}\label{InertiaMinors}
    Let $G$ be a minor of $H$. Consider any $\phi: E(G)\rightarrow [0,\infty)$ and any real number $r$. Say $\sigma(D_{\phi G})=\{\lambda_1,\lambda_2,...,\lambda_n\}$. Then there exists a $\psi: E(H)\rightarrow [0,\infty)$ such that $\sigma(D_{\psi H})=\{r\lambda_1,r\lambda_2,...,r\lambda_n,0,....,0\}$.
\end{thm}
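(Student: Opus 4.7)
The plan is to induct on the length of a sequence of minor operations (edge deletion, vertex deletion, edge contraction) witnessing $G$ as a minor of $H$. In the base case $H = G$, I take $\psi := r\phi$; this requires $r \geq 0$ so that $\psi$ is nonnegative, and then $D_{\psi H} = rD_{\phi G}$, whose spectrum is exactly $\{r\lambda_1, \ldots, r\lambda_n\}$. (No zero eigenvalues need to be appended, matching the base case.)

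For the inductive step, I choose $H_1$ one minor operation closer to $G$ than $H$, so that $G$ is a minor of $H_1$. The inductive hypothesis furnishes $\psi_1 : E(H_1) \to [0,\infty)$ with the correct scaled-and-padded spectrum on $H_1$, and I extend $\psi_1$ to $\psi : E(H) \to [0,\infty)$ according to which operation produced $H_1$ from $H$. If $H_1 = H \setminus e$, I set $\psi(e)$ to any value exceeding $(|V(H)|-1)\max \psi_1$ and keep $\psi = \psi_1$ elsewhere; Lemma \ref{edgedeleting} then gives $D_{\psi H} = D_{\psi_1 H_1}$, and no new $0$ is required since $|V(H)| = |V(H_1)|$. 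If $H_1 = H/e$ where $e = uv$ is contracted to $w$, I set $\psi(e) := 0$ and, for each $H_1$-edge $wx$, transfer the weight $\psi_1(wx)$ onto one parent edge ($ux$ or $vx$) in $H$; when $x$ is a common neighbor of $u$ and $v$ so that two parent edges exist in $H$ (both corresponding to the single simple edge $wx$ in $H_1$), I inflate the other parent edge's weight enough that Lemma \ref{edgedeleting} makes it inert. Lemma \ref{edgecontracting} then produces $\sigma(D_{\psi H}) = \sigma(D_{\psi_1 H_1}) \cup \{0\}$. Finally, if $H_1 = H \setminus v$, I pick any neighbor $u$ of $v$ in $H$ (which exists by connectivity), set $\psi(\overline{uv}) := 0$, give every other edge of $H$ incident to $v$ a large weight, and keep $\psi = \psi_1$ elsewhere; the $v$-row of $D_{\psi H}$ then duplicates the $u$-row, and the row-duplication argument underlying Lemma \ref{edgecontracting} attaches a single additional $0$ eigenvalue.

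\textbf{Main obstacle.} The only subtle case is edge contraction, because the simple-graph convention silently fuses any two parent edges $ux, vx$ of $H$ into a single $H_1$-edge $wx$; the remedy is to park the surplus parent under a huge weight and erase its effect via Lemma \ref{edgedeleting} before applying Lemma \ref{edgecontracting}. Tallying the extra $0$ eigenvalues (one per contraction, one per vertex deletion, none per edge deletion) matches $|V(H)|-|V(G)|$ exactly, as required. A minor caveat is that the theorem implicitly needs $r \geq 0$, since a hollow nonnegative symmetric matrix always has a nonnegative Perron root and so cannot realize a strictly negative rescaling of another such matrix's spectrum.
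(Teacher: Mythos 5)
Your plan is the paper's plan, executed one operation at a time: set to-be-contracted edges to weight $0$, set to-be-deleted edges to a very large weight, scale the retained edges by $r$, and invoke Lemmas \ref{edgecontracting} and \ref{edgedeleting}. You are right that $r\geq 0$ is implicitly needed, and your explicit handling of the parallel-edge collision in the contraction case is a nice piece of care the paper's one-shot construction omits (handling vertex deletion separately is harmless but unnecessary once $G$ and $H$ are both connected, since one can realize a connected minor via edge contractions and deletions alone).

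However, there is a gap that both your proof and the paper's inherit: Lemma \ref{edgecontracting} is false as a spectral statement. If $v$ is an eigenvector of $D_{\phi'G'}$ for $\lambda$, then $D_{\phi G}(v_1,\ldots,v_{n-1},0)^T = \lambda(v_1,\ldots,v_{n-1},v_{n-1})^T$, which is not $\lambda(v_1,\ldots,v_{n-1},0)^T$ unless $\lambda v_{n-1}=0$, so the appended-zero vector is not generally an eigenvector. Concretely, on the path $v_1v_2v_3$ with $\phi(\overline{v_1v_2})=1$ and $\phi(\overline{v_2v_3})=0$ one has
\[ D_{\phi G}=\begin{pmatrix}0&1&1\\1&0&0\\1&0&0\end{pmatrix},\qquad \sigma(D_{\phi G})=\{\sqrt2,0,-\sqrt2\}, \]
while $\sigma(D_{\phi'G'})=\{1,-1\}$. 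What the duplicate-row construction actually preserves is \emph{inertia}, not spectrum: subtracting the $(n-1)$st row and column from the $n$th exhibits the congruence $D_{\phi G}\cong D_{\phi'G'}\oplus[0]$, and Sylvester's law of inertia finishes. Consequently neither your argument nor the paper's establishes Theorem \ref{InertiaMinors} as written (a spectral identity); both establish only the inertia version, which, conveniently, is all that Corollary \ref{InertialRestrictionMinorClosed} and the rest of the paper use downstream. If you wanted the literal spectral claim you would need to tune the retained edge weights rather than merely scale by $r$, and it is not obvious that this is always possible.
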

\begin{proof}
    By definition, $G$ can be reached from $H$ by contracting and deleting edges (since all our graphs are connected). For the edges $e\in E(H)$ of $H$ that will be contracted, set $\phi'(e)=0$. For the edges that will be deleted, set $\phi'(e)=N$, where $N$ is sufficiently large. For all other edges, set $\phi'(e)=r\phi(e)$. Then apply Lemma \ref{edgecontracting}, Lemma \ref{edgedeleting}, and the fact that when a matrix's entries are multiplied by a scalar, so are its eigenvalues.
\end{proof}

\begin{coro}\label{InertialRestrictionMinorClosed}
    The family ``graphs such that $D_{\phi G}$ always has $\leq n$ positive eigenvalues'' is minor-closed. It is therefore characterized by a finite set of forbidden minors.
\end{coro}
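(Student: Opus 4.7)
The plan is to deduce the corollary in two short steps: first prove minor-closedness directly from Theorem \ref{InertiaMinors}, then invoke the Robertson--Seymour graph minor theorem to upgrade minor-closedness into a finite forbidden minor characterization.

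For the first step, I would suppose $H$ lies in the family, so every $\psi: E(H) \rightarrow [0,\infty)$ satisfies $i_+(D_{\psi H}) \leq n$, and let $G$ be any minor of $H$. Given an arbitrary $\phi: E(G) \rightarrow [0,\infty)$, I would apply Theorem \ref{InertiaMinors} with $r = 1$ to produce a $\psi: E(H) \rightarrow [0,\infty)$ with $\sigma(D_{\psi H})$ equal to $\sigma(D_{\phi G})$ together with extra zeros. Since appending zeros does not change the count of positive eigenvalues, $i_+(D_{\phi G}) = i_+(D_{\psi H}) \leq n$, so $G$ is in the family. The only mild subtlety worth flagging is that one must choose $r > 0$ so that positive eigenvalues remain positive; taking $r = 1$ sidesteps this cleanly.

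For the second step, the Robertson--Seymour theorem asserts that every minor-closed family of finite graphs is the set of graphs avoiding some finite list of excluded minors, which finishes the argument. There is no substantive obstacle here: Theorem \ref{InertiaMinors} does all the work, and the forbidden-minor characterization is a black-box appeal.
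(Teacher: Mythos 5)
Your proposal is correct and follows exactly the same route as the paper: minor-closedness from Theorem \ref{InertiaMinors} (and your choice of $r=1$ is the right way to make that invocation precise), then Robertson--Seymour for the finite forbidden-minor list. You have simply spelled out details the paper leaves implicit.
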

\begin{proof}
    The first sentence follows from Theorem \ref{InertiaMinors}. The second sentence uses the celebrated Robertson-Seymour Theorem \cite{ROBERTSON2004325}. 
\end{proof}

\begin{coro}\label{WhatIsInteresting}
    If some $D_{\phi G}$ (for $|V(G)|=n$) has inertia $(a,b,c)$, then for any $m\in \mathbb{Z}^+$ there exists a $\psi H$ with $|V(H)|-|V(G)|=|E(H)|-|E(G)|=m$ and $D_{\psi H}$ having inertia $(a,b+m,c)$. Also,
    for any $G$, there exists a $\phi$ such that $D_{\phi G}$ has inertia $(1,0,|V(G)|-1)$. Also, every $G$ with $n$ vertices can have $i(D_{\phi G})$ attain the values $(0,n,0)$ and $(1,k,n-k-1)$ for $0\leq k\leq n-2.$
\end{coro}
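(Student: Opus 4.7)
The corollary has three parts; my plan is to handle them independently, each time combining Lemmas \ref{edgecontracting} and \ref{edgedeleting} with the classical fact that every positively weighted tree on $n$ vertices has distance inertia $(1,0,n-1)$ (\cite{MR289210,MR2133282}).

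For the first claim I would induct on $m$. Given $\phi G$ with inertia $(a,b,c)$, I would build $\psi H$ by attaching one new pendant vertex via one extra edge, keeping all original weights and assigning the new pendant edge weight $0$. This makes $|V(H)|-|V(G)|=|E(H)|-|E(G)|=1$, and Lemma \ref{edgecontracting} applied to that zero-weight pendant edge gives $\sigma(D_{\psi H})=\sigma(D_{\phi G})\cup\{0\}$, so the inertia picks up exactly one new zero. Iterating this step $m$ times delivers the required graph.

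For the second claim, my plan is to reduce to the tree case by repeated application of Lemma \ref{edgedeleting}: pick a spanning tree $T\subseteq G$, assign each tree edge weight $1$, and assign each non-tree edge a weight large enough that it never appears on a shortest path, so $D_{\phi G}$ equals the unit-weight distance matrix of $T$. Its inertia is $(1,0,n-1)$ by the classical tree result cited above.

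The third claim combines the first two. The inertia $(0,n,0)$ is realized at once by $\phi\equiv 0$. For $(1,k,n-k-1)$ with $0\leq k\leq n-2$, I would again choose a spanning tree $T$, give $E(G)\setminus E(T)$ sufficiently heavy weights, give any chosen $k$ of the $n-1$ tree edges weight $0$, and give the remaining $n-1-k$ tree edges weight $1$. Lemma \ref{edgedeleting} identifies $D_{\phi G}$ with the distance matrix of this partially zero-weighted tree, and $k$ successive applications of Lemma \ref{edgecontracting} pass to a unit-weighted tree $T^{(k)}$ on $n-k\geq 2$ vertices, contributing $k$ extra zeros to the spectrum; the classical tree result supplies inertia $(1,0,n-k-1)$ for the rest, giving $i(D_{\phi G})=(1,k,n-k-1)$ overall. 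The only nonroutine ingredient anywhere is the weighted-tree inertia fact, which I would cite rather than reprove; the single subtlety to watch is keeping $T^{(k)}$ nontrivial, which is precisely why the hypothesis $k\leq n-2$ appears in the statement.
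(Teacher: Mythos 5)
Your proposal matches the paper's argument: the first part adds zero-weight pendant edges and invokes Lemma~\ref{edgecontracting}; the second part reduces to a spanning tree (via Lemma~\ref{edgedeleting}) and cites the classical $(1,0,n-1)$ tree inertia result; the third part uses the all-zero weighting for $(0,n,0)$ and a spanning tree with $k$ contracted edges for $(1,k,n-k-1)$. You have merely spelled out, more explicitly than the paper does, the heavy-weight trick for killing non-tree edges and the bookkeeping of the $k$ added zeros.
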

\begin{proof}
    For the first part, it suffices to add leaves with edge weight 0, invoking Lemma \ref{edgecontracting}. For the second part, every graph has a spanning tree minor, and we invoke the theorem of \cite{MR289210} that every unweighted tree has distance inertia $(1,0,n-1)$. For the third part, consider the all-0 weighting and the $n-k$-vertex minors of this spanning tree. 
\end{proof}
\begin{rem}\label{OnlyPositivesAreInteresting}
    This corollary sharpens our inertial questions. It indicates that restricting the number of zero eigenvalues isn't especially interesting, nor is restricting the number of negative eigenvalues. (As \cite{CFJK} commented, negative eigenvalues are the overwhelming default for hollow, symmetric, nonnegative matrices.) By extension, neither is it interesting to consider ``nonnegative,'' ``nonpositive,'' or ``nonzero.'' Only the restriction of positive eigenvalues will be rich. Furthermore, we've identified a set of inertial options available to all $n$-vertex graphs. The most natural question is, when are these the \textit{only} options? That can be rephrased precisely as, ``when is $i_+(D_{\phi G})$ guaranteed to be $\leq 1$?'' This is answered by Theorem \ref{k23theorem}.
\end{rem}

\begin{thm}\label{metricembeddingisminorclosed}
Let $Q$ be a metric space (or family of metric spaces), and let $G$ be such that $D_{\phi G}$ is isometrically embeddable in $Q$ for all $\phi$. Then every minor $H$ of $G$ has this property as well. The ``always $Q$-embeddable'' family of graphs is characterized by a finite list of excluded minors.
\end{thm}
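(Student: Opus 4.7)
The plan is to first establish minor-closedness (the substantive part), then invoke Robertson-Seymour for the finite forbidden-minor list. For minor-closedness, I would mimic the weighting strategy of Theorem \ref{InertiaMinors}: given a minor $H$ of $G$ and an arbitrary $\phi: E(H)\to[0,\infty)$, I would construct $\psi: E(G)\to[0,\infty)$ as follows. Fix a realization of $H$ as $G/E_c \setminus E_d$ for disjoint edge sets $E_c,E_d \subseteq E(G)$, with $E(H)$ in bijection with $E(G)\setminus(E_c\cup E_d)$. Set $\psi(e)=0$ for $e\in E_c$, set $\psi(e)=N$ for $e\in E_d$ (where $N$ is chosen larger than $(|V(G)|-1)\max_{e\in E(H)}\phi(e)$, as in Lemma \ref{edgedeleting}), and set $\psi(e)=\phi(e')$ for the remaining edges under the bijection $e\leftrightarrow e'$.

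The crux is then showing that $D_{\phi H}$ arises as a principal submatrix of $D_{\psi G}$. Iterating Lemma \ref{edgedeleting} shows that the $N$-weighted edges in $E_d$ are never used by any shortest path, so we may as well work with $\psi$ restricted to $G\setminus E_d$. Iterating Lemma \ref{edgecontracting} shows that contracting a $0$-weight edge merely identifies two vertices whose rows and columns in the distance matrix are identical (and whose entries towards every third vertex agree by the triangle inequality applied with a $0$-length step). Hence, after picking one representative vertex in $V(G)$ for each vertex of $V(H)$, the resulting principal submatrix of $D_{\psi G}$ equals $D_{\phi H}$ exactly. An isometric embedding of $(V(G), d_{\psi G})$ into $Q$ restricts to an isometric embedding of any subset, so $D_{\phi H}$ is $Q$-embeddable. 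Since $\phi$ was arbitrary, $H$ inherits the property.

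For the second conclusion, the family of always-$Q$-embeddable graphs is now minor-closed, so the Robertson-Seymour Theorem (cited in Corollary \ref{InertialRestrictionMinorClosed}) immediately yields a finite list of excluded minors. I don't anticipate a serious obstacle; the only place to be careful is the correspondence between ``contracting an edge of weight zero'' at the matrix level (duplicating a row/column, per Lemma \ref{edgecontracting}) and ``identifying two points of a semimetric space'' at the metric level, and verifying that taking the single representative is compatible with simultaneous contractions of several edges. This follows by an easy induction on $|E_c|$, since after each contraction the remaining $0$-weight edges still have endpoints at distance $0$.
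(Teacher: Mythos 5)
Your proposal is correct and follows essentially the same route as the paper's proof: assign weight $0$ to edges to be contracted, a large weight $N$ to edges to be deleted, and the given weighting to the remaining edges, then observe that the minor's distance matrix sits inside the resulting $D$ (up to duplicate rows/columns from contraction) so any isometric embedding restricts, and finally invoke Robertson--Seymour. Your write-up is somewhat more explicit about iterating Lemmata \ref{edgecontracting} and \ref{edgedeleting} and about choosing representatives after several contractions, but the underlying argument is identical to the one in the paper.
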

\begin{proof}
    Again, it suffices to consider deletion and contraction of edges. Consider a weighting $\psi$ on $H$. Extend it to $\phi$ on $G$ by setting $\phi(e)=0$ for edges to be contracted, $\phi(e)=N$ sufficiently large for edges to be deleted, and $\phi(e)=\psi(e)$ for edges to be retained. Then $D_{\phi G}$ is simply $D_{\psi G}$ with some duplicate points (as in the proof of Lemma \ref{edgecontracting}. Duplicate points necessarily have the same image in $Q$, so an embedding of $D_{\phi G}$ gives an embedding of $D_{\psi H}$. 
    For the final sentence, we again use the Robertson-Seymour Theorem \cite{ROBERTSON2004325}.
\end{proof}

\section{Proving the main theorem}

\begin{defn}
	Given $S\subseteq \{1,...,n\}\equiv [n]$, we define the \textbf{cut metric} $\delta(S)$ to be the matrix $[a_{ij}]$ with
	\[ a_{ij}:=\begin{cases}
		0 & \mathrm{if} (i\in S \wedge j\in S) \vee (i\notin S \wedge j\notin S)\\ 
		1 & \mathrm{if} (i\notin S \wedge j\in S) \vee (i\in S \wedge j\notin S)
	\end{cases}.\]
\end{defn}	

\begin{defn}
	Given a semimetric $M\equiv ([n], d)$ and sets $A,B\subseteq [n]$, let $\alpha_{M}(A,B)$ equal

 \begin{align*}
		\frac{1}{2}\operatornamewithlimits{\min}_{a,a'\in A, b,b'\in B} \max( d(a,b)+d(a',b')-d(a',a)-d(b',b)&,\\ 
  d(a,b')+d(a',b)-d(a,a')-d(b,b')&,
  \\ 0&).
\end{align*}
	What we'll actually use is $\alpha_M(A):=\alpha_M(A, A^c)$, where $^c$ denotes the complement in $[n]$. Call this quantity the \textbf{isolation index} of $A$. If  $\alpha_M(A)>0$, call $\delta(A)$ an \textbf{$M$-split}. Denote the set of $M$-splits by $\Sigma_M$, and if $\Sigma_M=\emptyset$, we say that $M$ is \textbf{split-prime}.
\end{defn}
\begin{defn}
    Consider a graph $G$ with $V(G)=[n]$ and $S$ a subset of those vertices, consider an edge $e\equiv \overline{ab}$. If $a \in S$ and $b\in S^c$, or if $b\in S$ and $a\in S^c$, we call $e$ a \emph{bridge.}
\end{defn}

When it comes to graphs, we are accustomed to considering partitions of the vertices. Given a weighted graph $\mathcal{G}$, we might say that the splits of $D$ are those partitions ``not worth crossing unless you have to.'' Or, rephrased in rigorous language:

\begin{lmm}\label{collected1} Consider a weighted graph $\phi G$.
	Let $S,S^c$ be a partition of $V(G)$, and let $p_{uv}\subseteq G$ be a path in $G$ from $u$ to $v$. Say it uses $r>1$ bridges. If $\delta(S)$ is a $D_\mathcal{G}$-split, then there exists a path $p'_{uv}$ using $r-2$ bridges with $|p'_{uv}|+2\alpha_{D\mathcal{G}}(S)\leq |p_{uv}|$.  
\end{lmm}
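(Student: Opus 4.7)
My plan has three ingredients: use $\alpha_{D\mathcal G}(S)>0$ to bound the shortcut distance between the $S$-endpoints of two consecutive bridges of $p_{uv}$; splice in a shortest-path realization of that distance; and then, by a second application of the isolation-index inequality, show that this realization must avoid the cut entirely.

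First, since $r>1$, $p_{uv}$ contains two consecutive bridges $e_i, e_{i+1}$; I label their $S$- and $S^c$-endpoints $x_i, x_{i+1}\in S$ and $y_i, y_{i+1}\in S^c$, respectively. After possibly swapping the roles of $S$ and $S^c$, the piece $q_i$ of $p_{uv}$ between $e_i$ and $e_{i+1}$ lies in $S^c$. I then instantiate the defining inequality of $\alpha:=\alpha_{D\mathcal G}(S)$ at $a=x_i$, $a'=x_{i+1}$, $b=y_i$, $b'=y_{i+1}$: since $\alpha>0$, one of the two non-trivial arguments of the $\max$ must be $\ge 2\alpha$. Combining with $d(x_j, y_j)\le\phi(e_j)$ in the ``parallel'' case, or with the triangle inequality $d(x_i, y_{i+1})\le\phi(e_i)+d(y_i, y_{i+1})$ and its twin in the ``crossed'' case, and using $d(y_i, y_{i+1})\le |q_i|$, I obtain in both cases the key estimate
\[
d(x_i, x_{i+1}) + 2\alpha \;\le\; \phi(e_i) + |q_i| + \phi(e_{i+1}).
\]

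Next, I let $p'_{uv}$ be obtained from $p_{uv}$ by replacing the dip $e_i\cdot q_i\cdot e_{i+1}$ with a shortest $G$-path $P^*$ from $x_i$ to $x_{i+1}$; the length bound $|p'_{uv}|\le|p_{uv}|-2\alpha$ is then immediate.

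The main obstacle, and the step I expect to carry the real content, is showing that $P^*$ uses $0$ bridges so that $p'_{uv}$ has exactly $r-2$ of them. My plan here is to suppose for contradiction that $P^*$ crosses at least two bridges (parity forces an even number, since $x_i, x_{i+1}\in S$), call its first two $e'_1, e'_2$, and label their $S$- and $S^c$-endpoints $w_1, w_2\in S$ and $z_1, z_2\in S^c$. Because every sub-path of a shortest path is itself a shortest path, the six relevant distances $d(w_1, z_1)$, $d(w_2, z_2)$, $d(z_1, z_2)$, $d(w_1, w_2)$, $d(w_1, z_2)$, $d(w_2, z_1)$ will equal exactly the lengths read off $P^*$ (any strict inequality would yield a still-shorter substitution into $P^*$, violating its minimality). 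Writing $m$ for the length of the $S^c$-piece of $P^*$ between $e'_1$ and $e'_2$ and plugging these equalities into the isolation-index inequality at $a=w_1$, $a'=w_2$, $b=z_1$, $b'=z_2$, the ``parallel'' max-argument will evaluate to $-2m\le 0$ and the ``crossed'' one to $0$, forcing $2\alpha\le 0$ and contradicting $\alpha>0$. Hence $P^*\subset S$, and $p'_{uv}$ is the desired path.
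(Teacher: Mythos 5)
Your proof is correct and follows the same overall strategy as the paper's: isolate two consecutive bridges of $p_{uv}$, splice in a shortest path between their $S$-side endpoints, bound the savings by $2\alpha$ via the isolation-index inequality, and argue that the splice is bridge-free. The paper reaches the length bound by the degenerate instantiation $a=u$, $a'=v$, $b=b'$ (a single $S^c$-vertex on the path), while you use the genuine four-point instantiation at the bridge endpoints and split into the parallel/crossed cases; both yield the same estimate. The one place you add real content is the bridge-freeness of the spliced-in shortest path $P^*$: the paper asserts this implicitly in the sentence ``at least $2\alpha$ longer than the shortest path with no bridges,'' whereas you prove it by applying the isolation inequality to the first two bridges $P^*$ would have to use and deriving $2\alpha \le \max(-2m,\,0,\,0)=0$, contradicting $\alpha>0$. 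That explicit contradiction closes a step the paper glosses, so your version is, if anything, more complete.
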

\begin{proof}
	First, we prove it for $r=2$. Assume without loss of generality that $u,v$ lie in $S$. Now, for some $b\in V(p_{uv})\cap S^c$, we have $|p_{uv}|\geq d(u,b)+d(v,b)$. By the definition of $\alpha_D(S)$, $d(u,b)+d(v,b)\geq 2\alpha_D(S)+d(u,v)$, so any $p_{uv}$ using 2 bridges is at least $2\alpha_D(S)$ longer than the shortest path with no bridges. This completes the proof for $r=2$.
	
	Now, take arbitrary $r>1$. Isolate some sequence $x,y,...,y',x'$ that uses exactly 2 bridges: one from $x$ to $y$, another from $y'$ to $x'$. This is an induced subgraph $p_{xx'}$ of $p_{uv}$. Let $p'_{xx'}$ be the shortest path from $x$ to $x'$. By the previous case, $p'_{xx'}$ uses no bridges and has $|p'_{xx'}|+2\alpha_D(S)\leq |p_{xx'}|$. Let $p'_{uv}$ be obtained from $p_{uv}$ by replacing $p_{xx'}$ with $p'_{xx'}$. Thus, $p'_{uv}$ uses $r-2$ bridges and $|p'_{uv}|+2\alpha_D(S)\leq |p_{uv}|$. 
\end{proof}

\begin{lmm}[Part of Theorem 2 from \cite{MR1153934}]\label{Decompose}
	If $M$ is a metric, then
	\[M=M_0+\sum_{\delta(S)\in\Sigma_M}\alpha_M(S)\delta(S),\]
	where $M_0$ is split-prime. 
\end{lmm}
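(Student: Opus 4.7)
The plan is to construct the decomposition directly: set
\[M_0 := M - \sum_{\delta(S) \in \Sigma_M} \alpha_M(S)\,\delta(S),\]
and then verify that (i) $M_0$ is a semimetric and (ii) $M_0$ is split-prime. Once these are in hand the displayed equation of the lemma is a tautological rearrangement.

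For step (i), symmetry, vanishing on the diagonal, and nonnegativity follow from direct inspection of the definition of $\alpha_M$. The substantial content is the triangle inequality at an arbitrary triple $x,y,z$. For any cut $\delta(S)$, the combination $\delta(S)(x,y) + \delta(S)(y,z) - \delta(S)(x,z)$ equals $2$ if $y$ lies opposite to both $x$ and $z$ relative to $S$, and $0$ otherwise. Thus the triangle inequality for $M_0$ at $(x,y,z)$ reduces to
\[2\sum_{S \in \mathcal{S}(x,y,z)} \alpha_M(S) \;\leq\; M(x,y) + M(y,z) - M(x,z),\]
where $\mathcal{S}(x,y,z)$ is the set of $M$-splits separating $y$ from $\{x,z\}$. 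I would prove this by choosing witnessing quadruples $(a,a',b,b')$ for each $\alpha_M(S)$ that involve $x,y,z$, using the definition of the minimax to get a pointwise upper bound on each $\alpha_M(S)$, and then showing that the resulting bounds sum up in a telescoping fashion controlled by $M(x,y)+M(y,z)-M(x,z)$. This is the real engine of the proof and is precisely where the specific ``max of two crossing differences'' structure of the isolation index is needed.

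For step (ii), suppose for contradiction that some $T \subseteq [n]$ had $\alpha_{M_0}(T) > 0$. Since $\alpha_M$ is linear in the metric argument inside the $\min$--$\max$ and each $\delta(S)$ is itself a cut, one can compute how each term $\alpha_M(S)\delta(S)$ shifts the isolation index of $T$; summing the shifts across $S \in \Sigma_M$ should give exactly $\alpha_{M_0}(T) = \alpha_M(T) - \alpha_M(T) = 0$ when $T \in \Sigma_M$, and $\alpha_{M_0}(T) \leq 0$ when $T \notin \Sigma_M$ (using that $\alpha_M(T) = 0$ in the latter case together with nonnegativity of the shifts). Either way we reach a contradiction.

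The main obstacle is step (i), and specifically the displayed inequality. The Bandelt--Dress insight I would rely on is that every pair of splits $\delta(S), \delta(T)$ with positive isolation index is \emph{weakly compatible}: their intersection pattern forbids all four ``quadrants'' $S \cap T, S \cap T^c, S^c \cap T, S^c \cap T^c$ from simultaneously witnessing the minimax. This rigidity is what forces the sum of isolation indices over splits isolating $y$ from $\{x,z\}$ to fit inside the Gromov-product gap on the right-hand side. Establishing weak compatibility and then doing the telescoping bookkeeping carefully is the heart of the argument; once those are in place, both (i) and (ii) follow with only routine extra work.
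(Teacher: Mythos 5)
The paper does not prove this lemma: it is quoted as "Part of Theorem 2 from \cite{MR1153934}," so there is no in-paper argument to compare your proposal against. What you have done is sketch the Bandelt--Dress proof, and your skeleton (define $M_0$ by subtraction, show $M_0$ is a semimetric, show $M_0$ is split-prime) matches theirs. But there are genuine gaps that keep this from being a proof.

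First, the triangle inequality. Your reduction of $M_0(x,y)+M_0(y,z)-M_0(x,z)\geq 0$ to the inequality
\[2\sum_{S\in\mathcal S(x,y,z)}\alpha_M(S)\;\leq\;M(x,y)+M(y,z)-M(x,z)\]
is correct; but proving that inequality is essentially the entire content of the theorem, and ``choose witnessing quadruples \ldots{} telescoping bookkeeping'' is not an argument. The Bandelt--Dress proof of this step is several pages and does not reduce to a single telescoping sum. In particular, each $\alpha_M(S)$ is a \emph{minimum} over quadruples, so you may instantiate the quadruple how you like for an upper bound on an individual $\alpha_M(S)$, but you then need the chosen upper bounds to collectively sum below the Gromov-product gap, and that requires controlling how the various splits in $\mathcal S(x,y,z)$ interact. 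You gesture at ``weak compatibility'' for this, but you describe it as a property of pairs of splits (``every pair of splits \ldots{} is weakly compatible''); in Bandelt--Dress weak compatibility is a condition on \emph{triples} of splits (no three splits of the system admit a four-point configuration on which all three cross pairwise in the forbidden pattern). As stated, your intermediate claim is the wrong statement.

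Second, and more decisively, step (ii) rests on a false premise. You assert that ``$\alpha_M$ is linear in the metric argument inside the min--max'' so that one can ``compute how each term $\alpha_M(S)\delta(S)$ shifts the isolation index of $T$'' and ``sum the shifts.'' The isolation index is a minimum of maxima of affine functions of the entries of $M$; it is concave and piecewise linear in $M$, not linear, and the shift incurred by subtracting a sum of cut metrics is emphatically not the sum of the shifts from subtracting each cut metric individually (the minimizing quadruple and the active term of the max both change as you subtract). Bandelt and Dress have to work to show that the isolation indices in $M_0$ all vanish; your argument for that conclusion does not go through as written.

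So: the high-level plan is the right one and is the same as the source you are reproving, but both steps (i) and (ii) are missing the actual engine (the weak-compatibility machinery and its consequences for sums of isolation indices), and step (ii) as written relies on a linearity property the isolation index does not have.
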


\begin{defn}
	If $M_0=\mathbf{0}$, we say that $M$ is \textbf{totally decomposable}. 
\end{defn}

\begin{defn}
	Given a metric $M$, we say that $M'$ is a \textbf{(distance) minor} of $M$ if $M'$ is a principal submatrix of some $M''$, where $M''$ is of the form
	\[\lambda_0 M_0 +\sum_{\delta(S)\in \Sigma_M}\lambda_S\delta(S)\]
	for some nonnegative $\lambda_0$, $\{\lambda_S\}$, and $M_0$ is the $M_0$ from Lemma \ref{Decompose}. 
\end{defn}
\begin{defn}
	Consider a graph $G$ and a set $S\subseteq V(G)$. The \emph{cut weighting} for $S$ is the function $\Delta(S)$ defined as 
	\[\Delta(S)(e):=\begin{cases}
		1 & \mathrm{if } \;$e$\; \mathrm{\;is\; a\; bridge}\\ 
		0 & \mathrm{otherwise}
	\end{cases}.\]
\end{defn}
The cut-weighting is our original tool, designed to ``play nicely'' with the cut metric. Now we confirm that it does. 

\begin{lmm}\label{cutweighting}
	Let $G$ be a graph, $\phi$ a weighting on $G$, and $\delta(S)$ a $D_{\phi G}$-split. Then $D_{(\phi+x\Delta(S))G}=D_{\phi G}+x\delta(S)$ for $x\in [-\alpha_{D_{\mathcal{G}}}(S), \infty)$.
\end{lmm}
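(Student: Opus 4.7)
The plan is to prove the identity entrywise. Fix $u, v \in V(G)$. Since $\Delta(S)$ vanishes off bridges, the reweighting $\phi + x\Delta(S)$ increases each bridge's weight by exactly $x$ and leaves every other edge untouched, so for any $u$-$v$ path $p$ using $r$ bridges, $|p|_{(\phi+x\Delta(S))G} = |p|_{\phi G} + rx$. Hence
\[d_{(\phi+x\Delta(S))G}(u,v) \;=\; \min_p \bigl(|p|_{\phi G} + rx\bigr).\]
The $(u,v)$-entry of $D_{\phi G} + x\delta(S)$ is $d_{\phi G}(u,v) + r_0 x$, where $r_0 = 0$ if $u, v$ lie on the same side of $(S, S^c)$ and $r_0 = 1$ otherwise; note that every $u$-$v$ walk uses a number of bridges of parity $r_0$. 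So the claim reduces to showing that the minimum above is attained by a path with exactly $r_0$ bridges.

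The key step is to iterate Lemma \ref{collected1}. Any $u$-$v$ path $p$ with $r > r_0$ bridges (necessarily $r \equiv r_0 \pmod 2$) can, by $(r - r_0)/2$ successive applications of the lemma, be reduced to a $u$-$v$ path using exactly $r_0$ bridges and of length at most $|p|_{\phi G} - (r - r_0)\alpha_{D_\mathcal{G}}(S)$. Applied to a $\phi G$-geodesic, this shows that some geodesic uses $r_0$ bridges; applied to an arbitrary $p$ and using $d_{\phi G}(u,v)$ as a lower bound on the reduced path's length, it gives the key estimate
\[|p|_{\phi G} \;\geq\; d_{\phi G}(u,v) + (r - r_0)\,\alpha_{D_\mathcal{G}}(S).\]

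Substituting this into the path-length formula yields
\[|p|_{(\phi+x\Delta(S))G} \;\geq\; d_{\phi G}(u,v) + r_0 x + (r - r_0)\bigl(\alpha_{D_\mathcal{G}}(S) + x\bigr),\]
whose last term is nonnegative precisely when $x \geq -\alpha_{D_\mathcal{G}}(S)$. Under that hypothesis we obtain the lower bound $d_{\phi G}(u,v) + r_0 x$, attained by the shortest $\phi G$-path using $r_0$ bridges, matching the desired $(u,v)$-entry of $D_{\phi G} + x\delta(S)$.

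The main obstacle is the $x < 0$ portion of the range. For $x \geq 0$ the reweighting only lengthens every bridge, so the minimization obviously prefers paths with as few bridges as possible and the identity is near-immediate; the content lies in the regime $x \in [-\alpha_{D_\mathcal{G}}(S), 0)$, where discounting bridge weights could a priori let a detour through many bridges undercut the shortest path with $r_0$ bridges. Lemma \ref{collected1} is exactly the tool that rules this out --- it guarantees that each extra pair of bridges along any path costs at least $2\alpha_{D_\mathcal{G}}(S)$ of ambient $\phi G$-length, which is precisely the margin needed to absorb the per-bridge discount $|x| \leq \alpha_{D_\mathcal{G}}(S)$.
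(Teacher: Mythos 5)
Your proof is correct and follows essentially the same route as the paper: both use the observation that reweighting by $x\Delta(S)$ adds exactly $rx$ to the length of a path with $r$ bridges, and both invoke Lemma \ref{collected1} (iterated) to show that paths with the minimal bridge count $r_0$ dominate under the new weighting as long as $x \geq -\alpha_{D_\mathcal{G}}(S)$. The only cosmetic difference is that the paper organizes the argument around the minima over the classes $P_{uv}^k$, whereas you derive a pointwise lower bound for an arbitrary path and then exhibit a path attaining it; the substance is identical.
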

\begin{proof}
As $P_{uv}$ denotes the set of paths from $u$ to $v$, let $P_{uv}^n$ denote the set of paths from $u$ to $v$ that use exactly $n$ bridges. (Obviously, this set will be empty for even $n$ if $u$ and $v$ lie on opposite sides of the partition, and it will be empty for odd $n$ if they lie on the same side.) 

From \ref{collected1}, we have that 
\begin{equation}\label{needed}
	\min_{p\in P_{uv}^k}|p|_{\phi G}\leq \min_{p\in P_{uv}^{(k+2)}}|p|_{\phi G}-2 \alpha_{D_\mathcal{G}}(S)
\end{equation}
and it is clear that for all $p\in P^k_{uv}$,
\begin{equation}\label{1stneeded}
	 |p|_{(\phi+x\Delta(S))G}=|p|_{\phi G}+kx.
\end{equation}
Now, using \ref{needed} and \ref{1stneeded} we have 
\begin{align*}
    \min_{p\in P_{uv}^k} |p|_{\phi G} & \leq \min_{p\in P_{uv}^{k+2}} |p|_{\phi G} -2 \alpha_{D_\mathcal{G}}(S) \\
    (k+2)x+ \min_{p\in P_{uv}^k} |p|_{\phi G} & \leq (k+2)x+ \min_{p\in P_{uv}^{k+2}} |p|_{\phi G} -2 \alpha_{D_\mathcal{G}}(S)\\
   2x+ \min_{p\in P_{uv}^k} |p|_{(\phi+x\Delta(S)) G} & \leq \min_{p\in P_{uv}^{k+2}}|p|_{(\phi+x\Delta(S)) G} -2 \alpha_{D_\mathcal{G}}(S) \\ 
     \min_{p\in P_{uv}^k} |p|_{(\phi+x\Delta(S)) G} & \leq \min_{p\in P_{uv}^{k+2}}|p|_{(\phi+x\Delta(S)) G} -2(x+\alpha_{D_\mathcal{G}}(S)).
\end{align*}
Therefore, since $x\in [-\alpha_{D_{\mathcal{G}}}(S), \infty)$, we have that the shortest path (or some path tied for shortest) between any two vertices in the $\phi+x\Delta(S)$ weighting of $G$ uses the fewest bridges possible. That is, 0 or 1 bridges. Lemma \ref{collected1} implies that this was the case in the $\phi$ weighting as well. Therefore, the old shortest paths, lying in $P_{uv}^0$ if $u$ and $v$ are on the same side of the partition or in $P_{uv}^1$ if they are not, are the new shortest paths. If $u,v$ are on the same side of the partition, the lengths of the edges in the path are completely unchanged, so  $d_{(\phi+x\Delta(S))G}(u,v)=d_{\phi G}(u,v)$. If they are on the opposite sides of the partition, the path uses exactly one bridge, so the length of the path has been changed by $x$; i.e. $d_{(\phi+x\Delta(S))G}(u,v)=d_{\phi G}(u,v)+x$. By the previous two sentences, we have  $d_{(\phi+x\Delta(S))G}(u,v)=d_{\phi G}(u,v)+x\delta(S)(u,v)$, completing the proof that $D_{(\phi+x\Delta(S))G}=D_{\phi G}+x\delta(S)$.
\end{proof}

\begin{lmm}[A well-known graph fact]\label{WellKnownGraphFact}
If $H$ is a graph with maximum degree $\leq 3$, then for all graphs $G$, $G$ contains an $H$ minor if and only if it contains an $H$ subdivision.
\end{lmm}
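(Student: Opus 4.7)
The ``if'' direction is immediate: an $H$-subdivision in $G$ is a subgraph of $G$ that returns to $H$ upon contracting each subdivision path back to a single edge, so $H$ is a minor of $G$. This direction uses nothing about $\Delta(H)$.

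For the ``only if'' direction, I would start from the branch-set description of the minor: pairwise disjoint connected vertex sets $\{V_h\}_{h\in V(H)}$ in $G$ together with a chosen edge $e_{hh'}$ of $G$ from $V_h$ to $V_{h'}$ for each $hh'\in E(H)$. The goal is to upgrade this data into a subdivision by choosing a \emph{branch vertex} $v_h\in V_h$ for each $h$, and producing internally disjoint paths $P_{hh'}$ from $v_h$ to $v_{h'}$, each living inside $G[V_h]\cup\{e_{hh'}\}\cup G[V_{h'}]$.

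Fix $h$ and let $U_h\subseteq V_h$ be the set of $V_h$-endpoints of the edges $\{e_{hh'}:hh'\in E(H)\}$. Since distinct edges of $H$ incident to $h$ may share an attachment vertex in $V_h$, we only get $|U_h|\leq \deg_H(h)\leq 3$. Using connectedness of $G[V_h]$, choose a subtree $T_h$ of $G[V_h]$ spanning $U_h$ and prune it so that each leaf lies in $U_h$; then $T_h$ is either a single vertex, a path, or a tree with exactly three leaves (a ``$Y$''). The crux---and the one spot where the hypothesis $\Delta(H)\leq 3$ is used---is the following claim: \emph{any tree with at most three leaves contains a vertex $v_h$ such that the $T_h$-paths from $v_h$ to the members of $U_h$ are pairwise internally disjoint}. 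The cases are elementary: for a single vertex any choice works; for a path with $|U_h|\leq 2$, take $v_h$ to be a leaf of $T_h$ (automatically in $U_h$); for a path with $|U_h|=3$ the third attachment must be interior to $T_h$, and we take $v_h$ to be that interior attachment; for a $Y$, take $v_h$ to be the unique vertex of degree $3$.

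Finally, define $P_{hh'}$ by concatenating the $T_h$-path from $v_h$ to the $V_h$-attachment of $e_{hh'}$, the edge $e_{hh'}$, and the $T_{h'}$-path from the $V_{h'}$-attachment of $e_{hh'}$ to $v_{h'}$. Internal disjointness of two such paths that share only an endpoint $v_h$ reduces to the key claim inside $V_h$, together with the observation that when $e_{hh'}$ and $e_{hh''}$ share a $V_h$-attachment $x\ne v_h$ their outgoing endpoints still lie in disjoint branch sets $V_{h'}$ and $V_{h''}$; internal disjointness across paths touching four distinct branch sets is automatic from the disjointness of the $V_h$'s. The only real obstacle is thus the key claim about trees with $\leq 3$ leaves, and its failure for trees with $\geq 4$ leaves---no tree vertex supports four internally disjoint paths to four specified targets---is exactly what prevents the lemma from extending past degree $3$.
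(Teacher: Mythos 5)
The paper gives no proof of this lemma; it is stated with the tag ``a well-known graph fact'' and cited by use only, so there is nothing in the paper to compare against. The proposal therefore has to stand on its own, and its general strategy (branch-set model, pruned tree $T_h$ inside each $V_h$, choose a branch vertex $v_h$ from which the paths to the attachment points are internally disjoint) is indeed the standard one.

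However, there is a genuine gap in the handling of coincident attachments. You correctly observe that $U_h$ (the \emph{set} of attachment vertices in $V_h$) has size at most $\deg_H(h)$, but what must actually be controlled is the \emph{multiset} of attachments: two distinct edges $e_{hh'}$ and $e_{hh''}$ of $H$ may attach to the same vertex $x\in V_h$. You anticipate this and dismiss it in the last paragraph on the grounds that ``their outgoing endpoints still lie in disjoint branch sets,'' but that dismissal is wrong: if $x\neq v_h$, then $P_{hh'}$ and $P_{hh''}$ both traverse the entire (nontrivial) $T_h$-path from $v_h$ to $x$, so they share $x$ as an internal vertex, and you do not get a subdivision. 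A concrete failure: let $H=K_{1,3}$ with center $c$, let $G$ have vertices $a,b,u,w,d$ with edges $au,bu,uw,wd$, and take branch sets $V_c=\{u,w\}$ and singletons at $a,b,d$. Then $T_c$ is the path $u$--$w$, the two edges to $\ell_1,\ell_2$ attach at $u$, the edge to $\ell_3$ attaches at $w$, and $U_c=\{u,w\}$. Your rule ``take $v_c$ to be a leaf of $T_c$'' allows $v_c=w$, but then the two paths $w$--$u$--$a$ and $w$--$u$--$b$ share the internal vertex $u$.

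The fix is small: take $v_h$ to be the \emph{median} in $T_h$ of the multiset of attachment vertices (i.e., the unique vertex lying on all three pairwise tree-paths between attachments). When two attachments coincide at $x$, the median is $x$, which is exactly where $v_h$ is forced to sit; when the three attachments are distinct, the median is the degree-$3$ vertex of the $Y$ or the interior attachment on the path, matching your other cases. Since $\deg_H(h)\leq 3$, at most one vertex can be a repeated attachment, so this single correction suffices. With that change your key claim becomes correct, the disjointness argument across different branch sets goes through as you wrote it, and the proof is complete.
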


\begin{thm}\label{k23theorem}
	Let $G$ be a graph. The following are equivalent:
	\begin{enumerate}[label=(\roman*)]
		\item $G$ has no $K_{2,3}$ (graph) minor.
		\item $\nexists \phi: E(G) \rightarrow [0,\infty)$, $c\in \mathbb{R}$  s.t. $D_{cK_{2,3}}$ is a (distance) minor of $D_{\phi G}$.
		\item $\forall  \phi: E(G) \rightarrow [0,\infty)$, $D_{\phi G}$ is totally decomposable.
		\item $\forall \phi: E(G) \rightarrow [0,\infty)$, $D_{\phi G}$ is $\ell_1$-embeddable.
		\item $\forall \phi: E(G) \rightarrow [0,\infty)$, $i_+(D_{\phi G})=1$. 
	\end{enumerate}
\end{thm}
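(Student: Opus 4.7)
I plan to prove the theorem via the cycle (iii)$\,\Rightarrow\,$(iv)$\,\Rightarrow\,$(v)$\,\Rightarrow\,$(i)$\,\Rightarrow\,$(iii), and then derive (i)$\,\Leftrightarrow\,$(ii) as a corollary.

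For the three short arrows: (iii)$\,\Rightarrow\,$(iv) follows because Lemma \ref{Decompose} gives $D_{\phi G} = \sum_S \alpha_S \delta(S)$, and the map $v \mapsto (\alpha_S\,\mathbf{1}_{v\in S})_S$ is an isometric embedding into $\ell_1$ by a one-line calculation. For (iv)$\,\Rightarrow\,$(v), I use the classical fact (Deza--Laurent \cite{MR2841334}) that a finite $\ell_1$-embeddable metric is a nonnegative combination of cut metrics $\delta(S)$; since each $\delta(S)$ satisfies $w^\top\delta(S)w = -2(w^\top \mathbf{1}_S)^2 \le 0$ whenever $w\perp\mathbf{1}$, the matrix $D$ is negative semidefinite on the codimension-$1$ hyperplane $\mathbf{1}^\perp$, so by Cauchy interlacing $i_+(D)\le 1$; Proposition \ref{ObviousFacts} supplies the remaining positive eigenvalue. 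For (v)$\,\Rightarrow\,$(i), I argue the contrapositive: exploiting the $S_2\times S_3$ symmetry of unweighted $K_{2,3}$, a direct computation gives $\sigma(D_{K_{2,3}}) = \{3+\sqrt 7,\,3-\sqrt 7,\,-2,\,-2,\,-2\}$, so $i_+(D_{K_{2,3}}) = 2$, and Theorem \ref{InertiaMinors} (with $r=1$) lifts this to any $G$ containing a $K_{2,3}$ minor.

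The hard implication is (i)$\,\Rightarrow\,$(iii). My plan is to induct on $|V(G)|$ using the structure theorem that every $K_{2,3}$-minor-free graph is built from single edges, cycles, and $K_4$ via $1$-sums and $2$-sums; in particular, every $3$-connected $K_{2,3}$-minor-free graph is $K_4$, by a Wagner-type argument. The base cases are amenable to direct calculation: trees decompose using one split per edge (isolation index equal to that edge's weight); cycles admit the classical Bandelt--Dress decomposition; and $K_4$ under any weighting decomposes via its four singleton-vertex splits, as verified by the isolation-index formula. For the inductive step, I would invoke the Bandelt--Dress fact that total decomposability is preserved under $1$-sums and $2$-sums, with each split of a summand $G_i$ extending canonically to a split of the glued graph with the same isolation index. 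The main obstacle is the $2$-sum case, where one must verify how the isolation-index minimum over vertex quadruples interacts with paths routed through the shared edge. An alternative, more self-contained approach would iterate Lemma \ref{cutweighting} --- locate a $D_{\phi G}$-split, subtract its contribution via $\Delta(S)$, drive an edge weight to zero, contract via Lemma \ref{edgecontracting}, and induct --- but this too requires the $K_{2,3}$-free hypothesis to guarantee a nontrivial split exists at each stage.

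Finally, (i)$\,\Leftrightarrow\,$(ii) is a short consequence. If (i) holds, (iii) yields $D_{\phi G}$ totally decomposable; distance minors of a totally decomposable metric are principal submatrices of $\sum_S \alpha_S\delta(S)$, hence themselves nonnegative combinations of (smaller) cut metrics, hence have $i_+\le 1$, which rules out $D_{cK_{2,3}}$ (with $i_+ = 2$) as a distance minor. Conversely, if $G$ has a $K_{2,3}$ minor, Lemmas \ref{edgecontracting} and \ref{edgedeleting} produce a weighting $\phi$ for which $D_{K_{2,3}}$ (up to duplicated rows and columns) is a principal submatrix of $D_{\phi G}$, hence a distance minor.
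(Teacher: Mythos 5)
Your three short arrows are sound, and two of them are arguably cleaner than the paper's: for (iv)$\Rightarrow$(v) you give a self-contained quadratic-form argument (each $\delta(S)$ satisfies $w^\top\delta(S)w=-2(w^\top\mathbf{1}_S)^2\le 0$ on $\mathbf{1}^\perp$, hence $D$ is NSD there) where the paper just cites Deza--Laurent Thm.~6.3.1; and for (v)$\Rightarrow$(i) your eigenvalue computation $\sigma(D_{K_{2,3}})=\{3\pm\sqrt 7,-2,-2,-2\}$ plus Theorem \ref{InertiaMinors} is more direct than the paper's subdivision-plus-Cauchy-interlacing route. Your derivation of (i)$\Leftrightarrow$(ii) from (iii) is also fine.

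The problem is that (i)$\Rightarrow$(iii), which you rightly call the hard implication, is not proved --- you sketch two strategies and flag in each one the step you cannot yet settle. The structure-theorem route requires a lemma that total decomposability is preserved under $2$-sums with the isolation indices matching across the glued edge; you call this ``the main obstacle,'' and it is not a plug-and-play Bandelt--Dress citation (their $1$-sum statement is standard, the $2$-sum one needs a careful argument with the quadruple minimum in $\alpha_M$). The iterative route needs the claim that every $K_{2,3}$-minor-free weighted graph whose distance matrix is nonzero and split-prime cannot exist --- but you leave that as an unproven hypothesis, so the induction does not close.

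The paper avoids this entirely by inserting (ii) between (i) and (iii) and citing the Bandelt--Dress characterization of total decomposability (fact 11.1.13 of \cite{MR2841334}): a metric is totally decomposable if and only if it contains no $D_{cK_{2,3}}$ as a distance minor. That makes (ii)$\Rightarrow$(iii) immediate, and shifts the real work to (i)$\Rightarrow$(ii): translating absence of $K_{2,3}$ \emph{graph} minors into absence of $D_{cK_{2,3}}$ \emph{distance} minors. That is exactly where Lemma \ref{cutweighting} earns its keep: set $\psi:=\phi-\sum_{\delta(S)\in\Sigma}\alpha_{D_{\phi G}}(S)\Delta(S)$ to realize the split-prime residue $M_0$ as a graph distance $D_{\psi G}$, conclude (via 11.1.12/11.1.13) that $D_{cK_{2,3}}$ must be an actual principal submatrix of $D_{\psi G}$, and read off five vertices with vertex-disjoint $\psi$-geodesics forming a $K_{2,3}$ subdivision in $G$. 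Your ``alternative, more self-contained'' route is groping toward precisely this, but without the Bandelt--Dress forbidden-minor characterization of split-primeness the induction has no base case to fall through to. The fix is to abandon the $2$-sum induction and instead prove (i)$\Rightarrow$(ii) by the $\Delta(S)$-reweighting contrapositive, then get (ii)$\Rightarrow$(iii) from fact 11.1.13.
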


\begin{proof}\label{k23proof}
We prove $(i)\rightarrow(ii)\rightarrow(iii)\rightarrow(iv)\rightarrow(v)\rightarrow(i)$. 

$(i)\rightarrow (ii)$ We prove by contrapositive. Let $\phi: E(G) \rightarrow [0,\infty)$ be such that $D_{K_{2,3}}$ is a distance minor of $D_{\phi G}$. Define \[\psi:=\phi-\sum_{\delta(S)\in \Sigma_{D_{\phi G}}} \alpha_{D_{\phi G}}(S) \Delta(S)).\] 

(We have $n-1$ instead of $n$ to avoid symmetry problems.) By Lemma \ref{cutweighting}, Lemma \ref{Decompose}, and the definition of a $D_{\psi G}$-split, we see that $D_{\psi G}$ is split-prime. By definition 11.1.12 and fact 11.1.13 of \cite{MR2841334}, there exists a positive $c$ such that $D_{cK_{2,3}}$ is a principal submatrix of $D_{\psi G}$. Thus, there exist $x_1,x_2,x_3,y_1,y_2\in V(G)$ such that the shortest path from $x_i$ to $y_j$ has length $c$ (weighted by $\psi$), and these paths are vertex-disjoint (up to identification of vertices joined by an edge of weight 0), since otherwise the shortest path from $x_i$ to $x_j$, $i\neq j$, would have length $<2c$. (Also there must not be other, shorter paths between $x_i$ and $x_j$ or between $y_i$ and $y_j$ for $i\neq j$, but that's not so important for the argument.)
Therefore we have a $K_{2,3}$ subdivision in $G$, which is a $K_{2,3}$ graph minor of $G$. 

$(ii)\rightarrow (iii)$ Apply fact 11.1.13 from \cite{MR2841334}.\footnote{Contrary to what \cite{MR2841334} implies, this fact isn't stated as such in \cite{MR1153934}, the original source. Instead, it's an elementary consequence of Theorem 6 and Lemma 1.}

$(iii) \rightarrow (iv)$ For any totally decomposable distance $M$ on $[n]$, assign to each $S\subseteq [n-1]$ a coordinate. For every member of $S$, set that coordinate to equal $\alpha_M(S)$. For everything else, set that coordinate to 0. Inspection reveals that this is an isometric embedding. 

$(iv)\rightarrow (v)$ From Theorem 6.3.1 of \cite{MR2841334}, we have that all nonzero $\ell_1$-embeddable matrices $M$ have $i_+(M)=1$. 

$(v)\rightarrow (i)$ We prove by contrapositive. Let $G$ have a $K_{2,3}$ minor. Then, by Lemma \ref{WellKnownGraphFact}, some subdivision of $K_{2,3}$ is a subgraph of $G$. Let $\phi: E(G) \rightarrow [0,\infty)$ be such that the paths from one original vertex of $K_{2,3}$ to another have length 1. Let all other edges have weight $x\geq2$. Then the matrix \begin{center}
    $D_{K_{2,3}}\equiv$
$\begin{pmatrix}
    0 & 2 & 2 & 1 & 1 \\
    2 & 0 & 2 & 1 & 1 \\
    2 & 2 & 0 & 1 & 1 \\
    1 & 1 & 1 & 0 & 2 \\
    1 & 1 & 1 & 2 & 0 \\
    \end{pmatrix}$
\end{center}
is a principal submatrix of $D_{\phi G}$.  Both matrices are Hermitian, and we may calculate that $i_+(D_{K_{2,3}})= 2$, so by Cauchy's Interlacing Theorem, $i_+(D_{\phi G})\geq 2$. This completes the contrapositive and the proof. 
 \end{proof}

\begin{rem}
    Long after writing this section, we discovered that \cite{CutsTrees} had already proven the equivalence of $(i)$ and $(iii)$. Their proof is quite different; it uses the following facts.
    \begin{itemize}
        \item $K_{2,3}$-minor-free graphs are precisely those whose 2-connected components are either outerplanar or isomorphic to $K_4$. 
        \item The MaxFlow and MinCut values are equal for outerplanar graphs. 
        \item The MaxFlow/MinCut gap is equivalent to the distortion incurred when embedding in $\ell_1$. 
        \item Two weighted graphs isometrically embeddable in $\ell_1$, joined at a single point, form a weighted graph isometrically embeddable in $\ell_1$. 
        \item An unweighted $K_{2,3}$ is not isometrically embeddable in $\ell_1$.
    \end{itemize} 
    Our proof foregrounds the algebraic aspect more, and it might give better clues to the specialness of $K_{2,3}$.
\end{rem}

\section{The K-two-threes conjecture}

Experimentation suggests the following. If true, it is tight, by Lemma \ref{ktwothreesforbidden}.

\begin{conj}[The weak K-two-threes conjecture]\label{WeakConjecture}
    If $M$ is an $n$-by-$n$ metric matrix,\footnote{I.e., a hollow, symmetric, nonnegative matrix with $m_{jk}\leq m_{ij}+m_{jk}$ for all $1\leq i,j,k\leq n$. Alternatively, a matrix $M$ such that there exists a $G,phi$ such that $M=M_{\phi G}$.} then $M$ has $\leq \lfloor \frac{n+1}{3}\rfloor$ positive eigenvalues.
\end{conj}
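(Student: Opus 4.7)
The plan is to induct on $n$ and use Cauchy interlacing, viewing any $n \times n$ metric matrix $M$ as $D_{\phi K_n}$ with $\phi$ encoding $M$'s entries. The base cases $n \leq 4$ follow from Theorem \ref{k23theorem}: since $K_{2,3}$ has five vertices, no graph on four or fewer vertices admits it as a minor, forcing $i_+(M) \leq 1 \leq \lfloor (n+1)/3 \rfloor$.

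For the inductive step, Cauchy's interlacing theorem applied to any principal $(n-1)$-submatrix $M'$ gives $i_+(M) \leq i_+(M') + 1 \leq \lfloor n/3 \rfloor + 1$. When $n \equiv 2 \pmod 3$, say $n = 3k+2$, this equals $k+1 = \lfloor (n+1)/3 \rfloor$ and the inductive step is done. When $n \equiv 0$ or $1 \pmod 3$, Cauchy is loose by exactly one, so the goal becomes to exhibit a vertex whose removal does \emph{not} decrease $i_+(M)$, since induction on such a submatrix would then close the gap. This asymmetry is consistent with the conjectured extremals: $K_{2,3,\ldots,3}$ always has $n \equiv 2 \pmod 3$, so at other residues one expects enough slack to trim a vertex safely.

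To find such a vertex, I would split on the Bandelt--Dress decomposition $M = M_0 + \sum_S \alpha_M(S)\delta(S)$ from Lemma \ref{Decompose}. If $M_0 = 0$ (i.e.\ $M$ is totally decomposable), then by the proof of Theorem \ref{k23theorem} ((iii)$\Rightarrow$(iv)), $M$ is isometrically $\ell_1$-embeddable, and Theorem 6.3.1 of \cite{MR2841334} gives $i_+(M) = 1$, well inside the bound. Otherwise $M_0 \neq 0$, and Lemma \ref{cutweighting} lets us successively subtract off the cut contributions---each step being a rank-$2$ perturbation with inertia $(1, n-2, 1)$, which one can track carefully via Weyl's inequalities---reducing matters to the problem of finding an $i_+$-preserving vertex removal for the split-prime kernel $M_0$. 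The paradigmatic split-prime metric is $D_{K_{2,3}}$, and one hopes that the structural rigidity of split-primality extends enough to the general case to locate an ``inessential'' vertex.

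The main obstacle is precisely this last step, and it appears to be essentially as hard as the strong K-two-threes conjecture, from which the weak conjecture is an immediate corollary: if the strong conjecture holds, then $i_+(M) \geq k+1$ forces $G$ (with $M = D_{\phi G}$) to contain $K_{2,3,\ldots,3}$ with $k$ threes as a minor, giving $n \geq 3k+2$ and hence $i_+(M) \leq \lfloor (n+1)/3 \rfloor$. Absent that route, a direct proof seems to require either a structural classification of large split-prime metrics (generalizing the role of $D_{K_{2,3}}$) or a fundamentally new inertia inequality for metric matrices---perhaps via higher traces such as $\mathrm{tr}(M^3)$, or via a sharper accounting of how simultaneous cut subtractions distort Sylvester signatures. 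Both routes look genuinely difficult, which is why the statement stands as a conjecture.
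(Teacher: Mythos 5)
This statement is a \emph{conjecture} in the paper; it has no proof, and you correctly present your text as a strategy sketch rather than a completed argument. Your verification of the boundary behavior is sound: for $n\leq 4$ no graph has a $K_{2,3}$ minor, so Theorem~\ref{k23theorem} gives $i_+(M)\leq 1 = \lfloor (n+1)/3\rfloor$; Cauchy interlacing gives $i_+(M)\leq i_+(M')+1\leq \lfloor n/3\rfloor+1$, which matches the target exactly when $n\equiv 2\pmod 3$ and is loose by one otherwise; and the ``strong implies weak'' reduction is correct and is noted by the paper itself (the $K_{2,3,\dots,3}$ minor with $k$ threes has $3k+2$ vertices, forcing $n\geq 3k+2$). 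Lemma~\ref{ktwothreesforbidden} shows the bound is attained, so there is no room for a cheaper uniform bound.

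Where the sketch genuinely hand-waves is in the reduction to the split-prime kernel $M_0$. Each $\alpha_M(S)\delta(S)$ does have inertia $(1,n-2,1)$, but Weyl's inequalities in the direction you need give only $i_+(M)\leq i_+(M_0)+|\Sigma_M|$, and $|\Sigma_M|$ is not controlled by $n/3$; conversely, subtracting a cut metric can lower or raise $i_+$ unpredictably, so there is no clean transfer of an ``inessential vertex'' for $M_0$ back to one for $M$. When $M_0\neq 0$ you are no longer in the $\ell_1$-embeddable regime, and the structure theory of split-prime metrics beyond $D_{K_{2,3}}$ is precisely what is missing. You diagnose this correctly: the remaining step looks at least as hard as the strong conjecture, and your closing list of candidate techniques (trace inequalities, signature bookkeeping, structural classification of split-prime metrics) is a reasonable summary of the open problem rather than a resolution of it.
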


By Corollary \ref{InertialRestrictionMinorClosed}, the graphs with $\leq n$ positive eigenvalues can be described by finitely many forbidden minors, which must themselves by graphs $G$ such that, for some $\phi$, $M\phi G$ has $>n$ positive eigenvalues. 

\begin{lmm}\label{ktwothreesforbidden}
    The graph $K_{2,3,...,3}$ (with $k$ threes) has unweighted distance inertia $(k+1,0,2k+1)$.
\end{lmm}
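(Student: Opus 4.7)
The plan is to exploit the high symmetry of the complete multipartite graph. Setting $n = 3k+2$, I would first express the distance matrix as
\[ D = J + B - 2I, \]
where $J$ is the $n \times n$ all-ones matrix and $B$ is block-diagonal with one $2 \times 2$ all-ones block and $k$ copies of the $3 \times 3$ all-ones block (one block per part). Checking the three entrywise cases (diagonal; same part; different parts) confirms this decomposition.

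Next, I would split $\mathbb{R}^n = U \oplus U^{\perp}$, where $U$ is the $(k+1)$-dimensional span of the part-indicator vectors $\chi_1, \ldots, \chi_{k+1}$, and $U^{\perp}$ is the $(2k+1)$-dimensional subspace of vectors summing to zero on each part. Every $v \in U^{\perp}$ satisfies $Bv = 0$ (each all-ones block annihilates a vector with zero per-part sum) and $Jv = 0$ (the total sum also vanishes), so $Dv = -2v$. This immediately contributes $2k+1$ copies of the eigenvalue $-2$.

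It remains to diagonalize $D|_U$. In the basis $\chi_1, \ldots, \chi_{k+1}$, direct computation of $D\chi_1 = 2\mathbf{1}$ and $D\chi_j = 3\mathbf{1} + \chi_j$ (for $j \geq 2$) shows that $D|_U$ is represented by a $(k+1) \times (k+1)$ matrix $M$ whose first column is $(2,2,\ldots,2)^T$ and whose remaining columns consist of $3$'s with a $4$ on the diagonal. A clean ansatz then finishes the job: vectors of the form $(0, c_2, \ldots, c_{k+1})^T$ with $\sum_{j\geq 2} c_j = 0$ form a $(k-1)$-dimensional eigenspace for the eigenvalue $1$, while vectors of the form $(a,b,b,\ldots,b)^T$ reduce to a $2 \times 2$ system whose characteristic polynomial is $\lambda^2 - (3k+3)\lambda + 2$, with two positive roots (positive trace and positive product). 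Assembling the three contributions yields $i(D) = (k+1,\, 0,\, 2k+1)$, as claimed.

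No serious obstacle is anticipated; the argument is essentially a spectral decomposition via the equitable partition into color classes. The only subtlety worth flagging is that $M$ is not symmetric because the basis $\{\chi_i\}$ is not orthonormal, but that is harmless for extracting eigenvalues, which are basis-independent.
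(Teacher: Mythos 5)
Your argument is correct, and it takes a genuinely different route from the paper's. The paper disposes of this lemma entirely by citation: it notes that one can either iterate a gluing construction (Lemma 2.2 of a cited reference) on $k$ copies of $2(J_3-I_3)$ and one copy of $2(J_2-I_2)$, or directly invoke a known formula for the distance spectrum of complete multipartite graphs. You instead give a self-contained spectral decomposition via the equitable partition into color classes: writing $D = J + B - 2I$, peeling off the $(2k+1)$-dimensional space of per-part-mean-zero vectors as a $(-2)$-eigenspace, and then diagonalizing the $(k+1)\times(k+1)$ quotient matrix with the standard ``all-equal on the big parts'' ansatz, getting $k-1$ eigenvalues equal to $1$ and two positive roots of $\lambda^2 - (3k+3)\lambda + 2$. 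I checked the decomposition, the eigenvector computations, the quotient matrix $M$, and the characteristic polynomial of the $2\times 2$ block $\begin{pmatrix}2 & 3k \\ 2 & 3k+1\end{pmatrix}$; all are correct, and your remark on the non-symmetry of $M$ in the non-orthonormal $\{\chi_i\}$ basis is the right subtlety to flag. Your approach is longer but more transparent and informative --- it exhibits the eigenspaces explicitly and makes the count $(k+1, 0, 2k+1)$ visible rather than relegating it to an external reference --- while the paper's citation-based proof has the virtue of brevity and of pointing to the more general machinery that handles arbitrary complete multipartite graphs.
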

\begin{proof}
One can prove this by repeatedly applying the construction in Lemma 2.2 of \cite{MR364288} to $k$ copies of the matrix $2(J_3-I_3)$ and one copy of the matrix $2(J_2-I_2)$, or one can simply invoke Theorem 3.3 of \cite{ZHANG2014108}
\end{proof}

This motivates the prettier Conjecture \ref{StrongConjecture}. Its $n=1$ case is the $(i) \leftrightarrow (v)$ of Theorem \ref{k23theorem}. 

\begin{conj}[The strong K-two-threes conjecture]\label{StrongConjecture}
$D_{\phi G}$ has $\leq n$ positive eigenvalues for all $\phi$ if and only if $G$ has no $K_{2,3,...,3}$ minor (with $n$ threes). 
\end{conj}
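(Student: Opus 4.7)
The plan is to split the conjecture into two implications. The reverse direction, that a $K_{2,3,\dots,3}$ minor (with $n$ threes) in $G$ implies the existence of some $\phi$ with $i_+(D_{\phi G}) > n$, is an immediate corollary of existing machinery: Lemma \ref{ktwothreesforbidden} yields $i_+(D_{K_{2,3,\dots,3}}) = n+1$ for the unweighted minor, and Theorem \ref{InertiaMinors} then transfers this count to a weighting of the ambient graph $G$, producing a distance spectrum with the same $n+1$ positive values plus zeros. So no new work is needed for this direction.

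The forward direction --- that $K_{2,3,\dots,3}$-minor-freeness forces $i_+(D_{\phi G}) \leq n$ for all $\phi$ --- is the substance of the conjecture. My approach would be induction on $n$, with Theorem \ref{k23theorem} as the base case. For the inductive step, given $\phi G$, I would iteratively peel off splits via Lemma \ref{cutweighting} to write
\[D_{\phi G} = D_{\psi G} + \sum_{\delta(S) \in \Sigma_{D_{\phi G}}} \alpha_{D_{\phi G}}(S)\,\delta(S),\]
where $D_{\psi G}$ is split-prime. The sum on the right is a non-negative combination of cut metrics, hence $\ell_1$-embeddable, hence (by Theorem 6.3.1 of \cite{MR2841334}) contributes $i_+ \leq 1$. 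By Weyl's inequality on sums of symmetric matrices, $i_+(D_{\phi G}) \leq i_+(D_{\psi G}) + 1$, so it would suffice to show that any split-prime $D_{\psi G}$ on a $K_{2,3,\dots,3}$-minor-free (with $n$ threes) graph satisfies $i_+(D_{\psi G}) \leq n - 1$.

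The central obstacle, and the reason this remains a conjecture, is establishing the bound $i_+(D_{\psi G}) \leq n-1$ on split-prime residues. In the $n=1$ case the analogous step used Fact 11.1.13 of \cite{MR2841334}, which guarantees that any nontrivial split-prime metric contains $D_{cK_{2,3}}$ as a principal submatrix for some $c > 0$; absence of the $K_{2,3}$ minor then forced $D_{\psi G} = 0$. A clean analog would be: every sufficiently large split-prime metric contains $D_{cK_{2,3,\dots,3}}$ (with $n-1$ threes) as a principal submatrix, enabling a direct inductive appeal via a generalization of the $(ii)\to(i)$ argument in Theorem \ref{k23theorem}. However, the classification of split-prime metrics is richer in higher dimensions, and ruling out non-graph-metric examples under the $K_{2,3,\dots,3}$-minor constraint will likely require a structural theorem for $K_{2,3,\dots,3}$-minor-free graphs analogous to the outerplanar-plus-$K_4$ characterization used at $n=1$. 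As a fallback, one might attempt the contrapositive via Cauchy interlacing: assuming $i_+(D_{\phi G}) \geq n+1$, pass to a principal submatrix realizing exactly $n+1$ positive eigenvalues and reconstruct from its entries a $K_{2,3,\dots,3}$ subdivision in $G$. This too hinges on an eigenvalue-to-structure lemma in the spirit of Conjecture \ref{WeakConjecture}, which has not yet been established.
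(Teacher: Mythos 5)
This is stated in the paper as an open conjecture, not a theorem; the paper supplies no proof of the forward direction, and your proposal---which you candidly acknowledge is incomplete---does not close that gap either. Your reverse direction is correct and matches the paper's implied reasoning exactly: Lemma \ref{ktwothreesforbidden} gives $i_+(D_{K_{2,3,\dots,3}}) = n+1$ for the unweighted graph with $n$ threes, and Theorem \ref{InertiaMinors} (with $r=1$) propagates this inertia, padded with zeros, to a weighting of any supergraph. The split-decomposition framework for the forward direction is also sensible: the subadditivity $i_+(A+B)\le i_+(A)+i_+(B)$ together with the $\ell_1$-embeddability of the split part validly reduces the problem to bounding $i_+$ of the split-prime residue.

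However, the ``clean analog'' you propose is off by one and would not feed the contrapositive correctly. In the $n=1$ case, fact 11.1.13 of \cite{MR2841334} says a nontrivial split-prime metric contains $D_{cK_{2,3}}$ (one three) as a principal submatrix; the generalization you would actually need is that a split-prime graph metric with $i_+\ge n$ contains $D_{cK_{2,3,\dots,3}}$ with $n$ threes---not $n-1$---as a principal submatrix, so that absence of the $K_{2,3,\dots,3}$ ($n$ threes) minor forces $i_+(D_{\psi G})\le n-1$. Your formulation with $n-1$ threes only exhibits a $K_{2,3,\dots,3}$ ($n-1$ threes) minor, which is perfectly compatible with the hypothesis and hence produces no contradiction. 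You also take for granted that the split-prime residue $M_0$ of Lemma \ref{Decompose} is realized as $D_{\psi G}$ for a \emph{nonnegative} reweighting $\psi$ of the same graph; Lemma \ref{cutweighting} handles one split at a time, and subtracting all splits simultaneously requires checking that $\phi(e)-\sum_{S:\,e \text{ a bridge}}\alpha(S)\ge 0$, which does follow from the decomposition applied to the edge endpoints but deserves an explicit sentence. That said, you correctly locate the bottleneck: no structural lemma linking the positive inertia of a split-prime residue to the presence of a $K_{2,3,\dots,3}$ submetric is currently available, and this is precisely why the statement remains a conjecture.
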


The strong version implies the weak version, as it ought to. 

Furthermore, it is known that a matrix embeddable in some $\ell_p$ for $p\leq 2$ has at most one positive eigenvalue. Computer experimentation with generating random $\ell_p$-embeddable matrices suggest that for any $p>2$, there exist $\ell_p$-embeddable matrices with more than one positive eigenvalue, but that as $p$ approaches $2$, the necessary matrix size for this to occur tends to infinity.

\printbibliography

\end{document}